\theoremstyle{plain}
\newtheorem{defn}{Definition}
\newtheorem{thm}{Theorem}
\newtheorem{prop}[thm]{Proposition}
\newtheorem{lem}[thm]{Lemma}
\newtheorem{cor}[thm]{Corollary}
\newtheorem{eg}{Example}
\newcommand{\cblue}[1]{{\textcolor{blue}{#1}}} 
\newcommand{\cred}[1]{{\textcolor{black}{#1}}}
\newcommand{\ccred}[1]{{\textcolor{black}{#1}}}
\newcommand{\cgreen}[1]{{\textcolor{black}{#1}}}
\newcommand{\ccgreen}[1]{{\textcolor{black}{#1}}}
\title{D-critical loci for local toric Calabi-Yau 3-folds}
\author{Sheldon Katz}
\address{Department of Mathematics, University of Illinois at Urbana-Champaign, Urbana IL 61801 USA}
\email{katzs@illinois.edu}
\author{Yun Shi}
\address{CMSA, Harvard, Cambridge MA 02138 USA}
\email{yshi@cmsa.fas.harvard.edu}
\begin{document}
\maketitle{}
\begin{abstract}
	The notion of a d-critical locus is an ingredient in the definition of motivic Donaldson-Thomas invariants by \cite{BJM19}. There is a canonical d-critical locus structure on the Hilbert scheme of dimension zero subschemes on local toric Calabi-Yau 3-folds. This is obtained by truncating the $-1$-shifted symplectic structure on the derived moduli stack \cite{BBBBJ15}. In this paper we show the canonical d-critical locus structure has critical charts consistent with the description of Hilbert scheme as a degeneracy locus \cite{BBS13}. In particular, the canonical d-critical locus structure is isomorphic to the one constructed in \cite{KS21} for local $\mathbb{P}^2$ and local $\mathbb{F}_n$.  
\end{abstract}
\section{Introduction}
Donaldson-Thomas (DT) theory was introduced in \cite{Tho00} as an enumerative theory which gives a virtual count of stable coherent sheaves with fixed topological invariants on certain 3-folds, including Calabi-Yau threefolds. Motivic DT theory was introduced in \cite{KS}, and it produces an invariant in the monodromic Grothendieck ring which categorifies the classical DT invariant. Later, a general formalism for motivic DT invariants was developed in \cite{BJM19}.

A d-critical locus structure is a main ingrediant in the definition of motivic DT invariant in \cite{BJM19}. In general there is a canonical d-critical locus structure on the moduli space of coherent sheaves/perfect complexes on a Calabi-Yau 3-fold. This follows from the fact that the derived moduli spaces of coherent sheaves/perfect complexes on a Calabi-Yau 3-fold has a $-1$-shifted symplectic structure.  See \cite{PTVV13} for the case of compact Calabi-Yau 3-folds, and \cite{BD19} for noncompact Calabi-Yau 3-folds. Then \cite{BBJ}, \cite{BBBBJ15} show that there is a d-critical locus structure obtained by truncating the $-1$-shifted symplectic structure from derived geometry. 

Now let $X$ be a local toric Calabi-Yau 3-fold $\omega_S$, the total space of the canonical bundle of a smooth, complete toric surface $S$. In \cite{KS21}, we constructed an explicit d-critical locus structure on $\operatorname{Hilb}^n(\omega_S)$ for $S=\mathbb{P}^2$ or $S=\mathbb{F}_n$.  Our construction was based on the presentation of $\operatorname{Hilb}^n(\mathbb{C}^3)$ as a degeneracy locus \cite{BBS13}, and explicitly checking the compatibility of local sections on intersections of critical charts.  We also asked whether our d-critical locus structure agrees with the canonical one obtained from derived geometry.

In this paper, we show that the canonical d-critical locus structure on $\operatorname{Hilb}^n(X)$ has critical charts isomorphic to the charts $(\operatorname{Hilb}^n(\mathbb{C}^3), \operatorname{NHilb}^n(\mathbb{C}^3), W, i)$ described in Section~\ref{sec:dcrit}. In particular, the canonical d-critical locus structure on $\operatorname{Hilb}^n(\omega_S)$ is equivalent to the one constructed in \cite{KS21} for $S=\mathbb{P}^2$ or $S=\mathbb{F}_n$. 

While we were finishing up this work, we become aware of the recent preprint \cite{RS}, in which our main result of Section~\ref{sec:explicit} was proven by direct construction. Our approach uses the Whitehead theorem in derived geometry (Theorem \ref{whitehead}). We construct an explicit model for the derived stack together with a map to the derived moduli stack constructed in \cite{TV07}, and check that this map induces an isomorphism of cotangent complexes.  We expect that our strategy will be applicable more generally to produce other explicit constructions of derived moduli stacks.

\subsection{Outline of the paper}
In Section~\ref{sec:background} we review material used in this paper on derived geometry and d-critical locus structures. In Section~\ref{sec:explicit} we give an explicit description of the derived structure on the moduli stack of zero dimensional sheaves on $\mathbb{C}^3$. In Section~\ref{sec:toric} we deduce the superpotential from the $-1$-shifted sympletic structure of the derived moduli stack, and use it to prove the main result for local toric Calabi-Yau 3-folds.

\subsection{Notation} All schemes in this paper are assumed to be separated and of finite type over $\mathbb{C}$. We use $X$ to denote a smooth quasi-projective Calabi-Yau 3-fold. For a smooth surface $S$, we denote the total space of its canonical bundle by $\omega_S$, and the projection from $\omega_S$ to $S$ by $\pi: \omega_S\rightarrow S$. We use $\mathcal{C}$ to denote a dg-category. Our typical example is the dg-category of complexes of coherent sheaves on $X$. We denote by $\mathcal{C}^c$ the subcategory of \cred{pseudo-perfect} objects of $\mathcal{C}$. Given an algebra $B$, we denote the homotopy category of the dg category of $B$-modules by $\mathcal{C}_{dg}(B)$. 

\subsection{Acknowledgements}
We would like to thank Nachiketa Adhikari \ccred{and} Aron Heleodoro for helpful conversations, and especially Tony Pantev for helpful conversations about derived algebraic geometry and for suggesting improvements to the paper.  The research of the first-named author is supported by NSF grant DMS-1802242. The second author would like to thank Dhyan Aranha \ccred{and} Ningchuan Zhang for helpful conversations on derived geometry, and Ben Davison for teaching her the bimodule resolution at MSRI. The work is done while the second author is a postdoc at CMSA, Harvard. She would like to thank CMSA for the excellent working environment.

\section{background} \label{sec:background}
\subsection{Background on derived geometry}
In this section, we review the materials on derived geometry which we will use in this paper. We start with the notion of derived moduli stacks. We use the notion of derived stacks defined in \cite{TV06} and \cite{GR10a}.


Let $k$ be a fixed base ring. Fix the base model category $\mathcal{R}$ to be the symmetric monoidal model category of simplicial commutative k-modules. Denote the affine objects in this category by $k-D^-Aff$. One endows $k-D^-Aff$ with its \'{e}tale model topology. Then for the model site $(k-D^-Aff, et)$, \cite{TV06} defines the model category $k-D^-Aff^{\sim, et}$ of stacks on the model site.

\begin{defn} (Definition 2.2.2.14 \cite{TV06})
	A $D^-stack$ is an object $F\in k-D^-Aff^{\sim, et}$ which is a stack in the sense of Definition 1.3.2.1 \cite{TV06}.
\end{defn}

Let $\mathcal{C}$ be a dg-category. There is a notion of moduli of objects in $\mathcal{C}$ defined in \cite{TV07}.

\begin{defn} (\cite{TV07})
	Define a simplicial presheaf: $\mathcal{M}_\mathcal{C}: cdga_k\rightarrow sSet$ by 
	\begin{equation*}
		\mathcal{M}_\mathcal{C}(A)=Map_{dg-Cat}(\mathcal{C}^{op}, \widehat{A}_{pe}),
	\end{equation*}
	where $Map_{dg-Cat}$ is the mapping space of model categories, and $\widehat{A}_{pe}$ is the subcategory of $Int(A-Mod)$ consisting of perfect objects. 
\end{defn}
By Lemma 3.1 in \cite{TV07}, $\mathcal{M}_\mathcal{C}$ is a $D^-$ stack.
 
In this paper we work with dg-categories of complexes of coherent sheaves with compact support on local Calabi-Yau 3-folds. In particular we work with moduli of pseudo-perfect objects in a Calabi-Yau 3-category. We recall the relevant definition given in \cite{BD19}.
\begin{defn}(\cite{BD19})
	\label{Def: noncommCY}
	A non-commutative Calabi-Yau of dimension $d$ is a (very) smooth dg category $\mathcal{C}$ equipped with a Calabi-Yau structure of dimension $d$. 
\end{defn}
Here, a Calabi-Yau structure of dimension $d$ is a negative cyclic chain $\theta: k[d]\rightarrow HC^-(\mathcal{C})$ satisfying a certain non-degeneracy condition, see \cite{BD19}. Given such a non-commutative Calabi-Yau $\mathcal{C}$, the moduli of pseudo-perfect objects in $\mathcal{C}$ is defined in the following definition. Following the notions in \cite{GR10a}, let $PreStk$ be the $(\infty, 1)$ category of all prestacks defined in \cite{GR10b}. The objects are all admissible functors from the category of derived affine schemes to the $\infty$-category of spaces. 
\begin{defn} [Example 3.7, \cite{BD19}]
	The moduli space of objects $\mathcal{M}_\mathcal{C}$ in a compactly generated dg-category $\mathcal{C}$ is the prestack given on every affine $U$ by 
	\begin{equation*}
		\mathcal{M}_{\mathcal{C}}(U)=Map_{dg-Cat}(\mathcal{C}^c, Perf(U)),
	\end{equation*}
	where $\mathcal{C}^c$ is the subcategory of pseudo-perfect objects of $\mathcal{C}$, and $Perf(U)$ is the category of perfect complexes on $U$. 
\end{defn}


By Proposition 3.4 in \cite{TV07} and Example 3.7 in \cite{BD19}, the moduli space $\mathcal{M}_{\mathcal{C}}$ has the following universal property:
\begin{prop}(Proposition 3.4, \cite{TV07})
	\label{Univfam}
	\begin{equation*}
		Map(F, \mathcal{M}_{\mathcal{C}})=Map_{dg-Cat_{op}}(L_{pe}(F), \mathcal{C})
	\end{equation*}
	for $F\in k-D^-Aff^{\sim, et}$,
	and 
	\begin{equation*}
		\widehat{A}_{pe}\simeq L_{pe}(\underline{Spec}{A}).
	\end{equation*}
\end{prop}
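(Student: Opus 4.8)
\emph{Proof proposal.} The plan is to first check the equivalence over an affine base $F=\underline{Spec}A$, where it is essentially the definition of $\mathcal{M}_\mathcal{C}$ combined with the identification $\widehat{A}_{pe}\simeq L_{pe}(\underline{Spec}A)$, and then to bootstrap to an arbitrary $D^-$-stack by writing it as a homotopy colimit of representables and using that both sides of the asserted equivalence send such colimits to homotopy limits.

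Here $L_{pe}(F)$ denotes the dg-category of perfect complexes on $F$, which for a general $D^-$-stack can be described as $L_{pe}(F)\simeq\operatorname{holim}_{\underline{Spec}A\to F}\widehat{A}_{pe}$, the homotopy limit over the affines over $F$ with transition maps given by derived pullback of perfect modules (equivalently, the dualizable objects of $L_{qcoh}(F)$). When $F=\underline{Spec}A$ this indexing category has a terminal object, so the homotopy limit is computed there and $L_{pe}(\underline{Spec}A)\simeq\widehat{A}_{pe}$; this is the second assertion. For the first assertion with $F$ affine, the Yoneda lemma gives $Map(\underline{Spec}A,\mathcal{M}_\mathcal{C})\simeq\mathcal{M}_\mathcal{C}(A)=Map_{dg-Cat}(\mathcal{C}^{op},\widehat{A}_{pe})$, and inserting $\widehat{A}_{pe}\simeq L_{pe}(\underline{Spec}A)$ gives $Map_{dg-Cat_{op}}(L_{pe}(\underline{Spec}A),\mathcal{C})$. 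The several $(-)^{op}$'s here are harmless, because every perfect complex is dualizable and hence $M\mapsto M^\vee$ is an equivalence $\widehat{A}_{pe}^{op}\simeq\widehat{A}_{pe}$ (and likewise $L_{pe}(F)^{op}\simeq L_{pe}(F)$): this lets one move $op$'s freely across a mapping space in $dg$-$Cat$ and also reconciles the presentation of $\mathcal{M}_\mathcal{C}$ via $\mathcal{C}^{op}$ with the pseudo-perfect presentation $Map_{dg-Cat}(\mathcal{C}^c,Perf(U))$ recalled above.

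For a general $F$, choose a presentation $F\simeq\operatorname{hocolim}_{i\in I}\underline{Spec}A_i$ in $k-D^-Aff^{\sim, et}$ (every object is a homotopy colimit of representables). Since $\mathcal{M}_\mathcal{C}$ is a $D^-$-stack (Lemma~3.1 of \cite{TV07}), mapping into it turns this colimit into a homotopy limit, so $Map(F,\mathcal{M}_\mathcal{C})\simeq\operatorname{holim}_i Map(\underline{Spec}A_i,\mathcal{M}_\mathcal{C})\simeq\operatorname{holim}_i Map_{dg-Cat}(\mathcal{C}^{op},\widehat{(A_i)}_{pe})$ by the affine case; and since a mapping space commutes with homotopy limits in its target this equals $Map_{dg-Cat}(\mathcal{C}^{op},\operatorname{holim}_i\widehat{(A_i)}_{pe})$. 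It remains to identify $\operatorname{holim}_i\widehat{(A_i)}_{pe}\simeq L_{pe}(F)$ and to unwind the $op$'s once more to obtain $Map(F,\mathcal{M}_\mathcal{C})\simeq Map_{dg-Cat_{op}}(L_{pe}(F),\mathcal{C})$.

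That last identification is the step I expect to carry the real weight: it amounts to (fppf/\'etale) descent for perfect complexes, which is what ensures that the homotopy limit over any chosen affine presentation of $F$ computes the same dg-category and that forming dualizable objects commutes with the limit. This is a genuine theorem rather than formal nonsense, whereas the reduction to the affine case, the manipulations of mapping spaces, and the $op$-bookkeeping are all formal once descent is in hand.
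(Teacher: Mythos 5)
The paper does not actually prove this statement: it is quoted from Proposition 3.4 of \cite{TV07}, with $L_{pe}(F)$ defined (as recalled immediately after the proposition) by $L_{pe}(F)=(\operatorname{Holim}_i\widehat{A_i}_{pe})^{op}$ over the affines mapping to $F$. Your reconstruction --- the affine case by Yoneda plus the definition of $\mathcal{M}_\mathcal{C}$, then passage to a general $D^-$-stack by writing it as a homotopy colimit of representables, using that $\mathcal{M}_\mathcal{C}$ is a stack (Lemma~3.1 of \cite{TV07}) to turn this into a homotopy limit of mapping spaces, and commuting the mapping space past that limit --- is exactly the To\"en--Vaqui\'e argument, and your diagnosis that the one non-formal input is \'etale descent for perfect modules (i.e.\ that $A\mapsto\widehat{A}_{pe}$ is a stack of dg-categories, so the homotopy limit depends only on $F$ up to local equivalence and not on the chosen presentation) is correct. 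One cosmetic remark: since the paper's $L_{pe}$ already carries an $op$, the first displayed equivalence needs no duality whatsoever, as $Map_{dg\text{-}Cat}(\mathcal{C},\mathcal{D}^{op})\simeq Map_{dg\text{-}Cat}(\mathcal{C}^{op},\mathcal{D})$ formally; the self-duality $M\mapsto M^{\vee}$ of $\widehat{A}_{pe}$ is what is actually needed for the second displayed claim $\widehat{A}_{pe}\simeq L_{pe}(\underline{Spec}A)=(\widehat{A}_{pe})^{op}$.
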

Here $\widehat{A}_{pe}$ is the dg category of $A$ modules, and $L_{pe}(F)$ is defined by $L_{pe}(F):=(Holim_i\widehat{A_i}_{pe})^{op}\in Ho(dg-Cat)$ for any $F\in D^-St(k)$.
The mapping space in the above proposition can be realized as a bimodule over the dg-categories $L_{pe}(F)$ and $\mathcal{C}$ by the following theorem:
\begin{thm}[\cite{Toe06b}]
	\label{Moritaequ}
	Let $\mathcal{D}_1$ and $\mathcal{D}_2$ be two dg-categories, and let $M(\mathcal{D}_1, \mathcal{D}_2)$ be the category of right quasi-representable $\mathcal{D}_1\otimes \mathcal{D}_2^{op}$ modules and quasi-isomorphisms between them. Then there exists a natural weak equivalence of simplicial sets:
	\begin{equation*}
		Map(\mathcal{D}_1, \mathcal{D}_2)=N(M(\mathcal{D}_1, \mathcal{D}_2)), 
	\end{equation*}
	where $N(M(\mathcal{D}_1, \mathcal{D}_2))$ is the nerve of the category $M(\mathcal{D}_1, \mathcal{D}_2)$.
\end{thm}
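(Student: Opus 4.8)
The natural approach is to reconstruct Toën's derived Morita theory, deducing the statement from the closed symmetric monoidal structure on $\mathrm{Ho}(dg\text{-}Cat)$. The first step is to recall that $dg\text{-}Cat$, with Tabuada's model structure (taking as weak equivalences the quasi-equivalences, or their Morita-localized variant), is a symmetric monoidal model category for a suitable model of the tensor product; hence $\mathrm{Ho}(dg\text{-}Cat)$ is closed symmetric monoidal, with derived tensor product $\otimes^{\mathbb{L}}$ and internal Hom $\mathbb{R}\underline{\mathrm{Hom}}(-,-)$. Since the unit object $\mathbb{1}$ — the dg-category with one object and $k$ in degree zero as endomorphisms — is cofibrant and $\mathbb{1}\otimes^{\mathbb{L}}\mathcal{D}_1\simeq\mathcal{D}_1$, the defining adjunction gives a natural weak equivalence $\mathrm{Map}(\mathcal{D}_1,\mathcal{D}_2)\simeq\mathrm{Map}(\mathbb{1},\mathbb{R}\underline{\mathrm{Hom}}(\mathcal{D}_1,\mathcal{D}_2))$. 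It then remains to: (a) identify $\mathbb{R}\underline{\mathrm{Hom}}(\mathcal{D}_1,\mathcal{D}_2)$ with the dg-category of right quasi-representable $\mathcal{D}_1\otimes\mathcal{D}_2^{op}$-modules; and (b) show that $\mathrm{Map}(\mathbb{1},\mathcal{E})$ is the nerve of the subcategory $w\mathcal{E}$ of quasi-isomorphisms of any dg-category $\mathcal{E}$.

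For (b), after replacing $\mathcal{E}$ by a fibrant model, a dg-functor $\mathbb{1}\to\mathcal{E}$ is the choice of an object of $\mathcal{E}$, and a homotopy between two such functors — encoded via a cylinder object on $\mathbb{1}$, built from the dg-category with two objects and a cofibrant "interval" of morphisms between them — is a closed degree-zero morphism in $\mathcal{E}$ which is a quasi-isomorphism. Running this through an explicit (co)simplicial resolution shows that $\mathrm{Map}(\mathbb{1},\mathcal{E})$ is weakly equivalent to $N(w\mathcal{E})$, the nerve of the category whose objects are the objects of $\mathcal{E}$ and whose morphisms are the degree-zero quasi-isomorphisms. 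This part is essentially bookkeeping about cylinder objects in $dg\text{-}Cat$.

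Step (a) is the technical heart of the argument, and the step I expect to be the real obstacle. Let $\widehat{\mathcal{D}_2}$ be the dg-category of cofibrant dg-$\mathcal{D}_2$-modules, with its Yoneda embedding $h_{\mathcal{D}_2}\colon\mathcal{D}_2\hookrightarrow\widehat{\mathcal{D}_2}$, which is quasi-fully-faithful with essential image the representable modules. One first proves $\mathbb{R}\underline{\mathrm{Hom}}(\mathcal{D}_1,\widehat{\mathcal{D}_2})\simeq\widehat{\mathcal{D}_1\otimes^{\mathbb{L}}\mathcal{D}_2^{op}}$ — that is, maps from $\mathcal{D}_1$ into the free cocompletion of $\mathcal{D}_2$ are the same as $\mathcal{D}_1\otimes^{\mathbb{L}}\mathcal{D}_2^{op}$-modules — using the universal property of the module category together with the tensor--hom adjunction for bimodules, taking care to derive everything (cofibrant replacement of a tensor factor, projectively cofibrant models for module categories). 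Since $h_{\mathcal{D}_2}$ is quasi-fully-faithful, postcomposition with it makes $\mathbb{R}\underline{\mathrm{Hom}}(\mathcal{D}_1,\mathcal{D}_2)\to\mathbb{R}\underline{\mathrm{Hom}}(\mathcal{D}_1,\widehat{\mathcal{D}_2})$ quasi-fully-faithful, and one identifies its essential image with those bimodules $F$ such that $F(x,-)$ is a representable $\mathcal{D}_2^{op}$-module for every object $x$ of $\mathcal{D}_1$ — precisely the right quasi-representable ones. Pinning down the variance conventions, verifying quasi-full-faithfulness at the level of mapping complexes, and making the "objectwise representable" condition homotopy-invariant are where the work concentrates.

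Combining the steps yields $\mathrm{Map}(\mathcal{D}_1,\mathcal{D}_2)\simeq\mathrm{Map}(\mathbb{1},\mathbb{R}\underline{\mathrm{Hom}}(\mathcal{D}_1,\mathcal{D}_2))\simeq N(w\,\mathbb{R}\underline{\mathrm{Hom}}(\mathcal{D}_1,\mathcal{D}_2))$, and by step (a) the subcategory $w\,\mathbb{R}\underline{\mathrm{Hom}}(\mathcal{D}_1,\mathcal{D}_2)$ of quasi-isomorphisms is identified with $M(\mathcal{D}_1,\mathcal{D}_2)$, the category of right quasi-representable $\mathcal{D}_1\otimes\mathcal{D}_2^{op}$-modules and quasi-isomorphisms between them. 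Naturality in $\mathcal{D}_1$ and $\mathcal{D}_2$ is inherited from the internal-Hom adjunction, the Yoneda embedding, and the nerve construction, giving the claimed natural weak equivalence $\mathrm{Map}(\mathcal{D}_1,\mathcal{D}_2)=N(M(\mathcal{D}_1,\mathcal{D}_2))$.
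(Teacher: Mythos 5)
First, note that the paper offers no proof of this statement: it is imported verbatim from To\"en's \emph{The homotopy theory of dg-categories and derived Morita theory} \cite{Toe06b}, so the only meaningful comparison is with To\"en's original argument. Measured against that, your proposal has a genuine logical gap: it inverts the order of the theorems and in doing so becomes circular. Your opening move --- ``$dg\text{-}Cat$ with its tensor product is a symmetric monoidal model category, hence $\mathrm{Ho}(dg\text{-}Cat)$ is closed symmetric monoidal with an internal Hom $\mathbb{R}\underline{\mathrm{Hom}}$ satisfying the adjunction $\mathrm{Map}(\mathcal{D}_1,\mathcal{D}_2)\simeq \mathrm{Map}(\mathbb{1},\mathbb{R}\underline{\mathrm{Hom}}(\mathcal{D}_1,\mathcal{D}_2))$'' --- rests on a false premise. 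The tensor product of dg-categories does \emph{not} make the Dwyer--Kan (or Morita) model structure into a monoidal model category: the pushout-product axiom fails and the tensor product of cofibrant objects need not be cofibrant. To\"en points this out explicitly; the existence of internal Homs in $\mathrm{Ho}(dg\text{-}Cat)$ is his Theorem 6.1, it is \emph{defined} as the dg-category of (cofibrant) right quasi-representable $\mathcal{D}_1\otimes^{\mathbb{L}}\mathcal{D}_2^{op}$-modules, and its universal property is \emph{verified using} the mapping-space formula you are trying to prove. So steps (a) and your framing adjunction presuppose the theorem.

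The second soft spot is step (b). The identification $\mathrm{Map}(\mathbb{1},\mathcal{E})\simeq N(w\mathcal{E})$ is not ``bookkeeping about cylinder objects'': cylinder objects only compute $\pi_0$ of the mapping space (homotopy classes of dg-functors), whereas the theorem concerns the full simplicial mapping space, which requires a cosimplicial resolution of $\mathbb{1}$ in $dg\text{-}Cat$ --- precisely the object that is hard to control and the reason To\"en's theorem is nontrivial. Indeed, taking $\mathcal{D}_1=\mathbb{1}$ in the statement recovers your (b) as a special case (right quasi-representable $\mathbb{1}\otimes\mathcal{D}_2^{op}$-modules are exactly the quasi-representable ones, i.e.\ objects of $\mathcal{D}_2$ up to quasi-isomorphism), so you cannot use it as an input. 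To\"en's actual route runs the other way: he constructs a comparison map between $\mathrm{Map}(\mathcal{D}_1,\mathcal{D}_2)$ and the nerve of quasi-isomorphisms of cofibrant right quasi-representable bimodules directly, reduces by invariance under quasi-equivalence, and proves the equivalence via the Dwyer--Kan classification-space technology for the model category of bimodules ($N(wM)\simeq\coprod_{[x]}B\,\mathrm{haut}(x)$), with the monoidal-closed structure of $\mathrm{Ho}(dg\text{-}Cat)$ and the formula $\mathbb{R}\underline{\mathrm{Hom}}(\mathcal{D}_1,\widehat{\mathcal{D}_2})\simeq\widehat{\mathcal{D}_1\otimes^{\mathbb{L}}\mathcal{D}_2^{op}}$ obtained afterwards as corollaries. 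Your outline correctly names all the objects involved, but to make it a proof you would have to either establish the internal Hom adjunction independently (essentially redoing To\"en's Sections 4 and 6 in the opposite order, which is not known to work in the model-categorical setting) or pass to an $\infty$-categorical framework where the closed monoidal structure can be built first.
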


Finally we recall the Whitehead theorem for derived stacks, the main tool we will use from derived geometry. We follow the formalism of Gaitsgory-Rozenblyum. Let $PreStk_{def}\subset PreStk$ be the full subcategory spanned by objects that admit a deformation theory. \ccred{We denote} the categories of classical affine schemes and classical prestacks by $^{cl}Sch^{aff}$ and $^{cl}PreStk$.

\begin{thm}[\cite{GR10b}, Proposition 8.3.2]
	\label{whitehead}
	Let $f:X_1\rightarrow X_2$ be a map between objects of $PreStk_{def}$. Let $X_{0, cl}$ be an object in $^{cl}PreStk$. Let $g_i:X_{0, cl}\rightarrow X_i$ be pseudo-nilpotent embeddings such that $f\circ g_1=g_2$. Suppose also that for any $S\in ^{cl}Sch^{aff}$, and any map $x_0:S\rightarrow X_{0, cl}$, the induced map 
	\begin{equation*}
		T^*_{x_2}(X_2)\rightarrow T^*_{x_1}(X_1)
	\end{equation*}
is an isomorphism, where $x_i=g_i\circ x_0$.  Then $f$ is an isomorphism. 
\end{thm}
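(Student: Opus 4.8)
The plan is to verify the hypotheses of the Whitehead principle directly, by climbing the Postnikov tower of the test ring. Since $X_1$ and $X_2$ lie in $PreStk_{def}$, they are convergent, infinitesimally cohesive, and carry (pro-)cotangent complexes; in particular a map between them is an equivalence as soon as it induces an equivalence $X_1(A)\to X_2(A)$ for every connective cdga $A$, and by convergence $X_i(A)\simeq\lim_n X_i(\tau_{\le n}A)$ it suffices to treat $n$-truncated $A$, by induction on $n$. The base case $n=0$ asks that $^{cl}f\colon{}^{cl}X_1\to{}^{cl}X_2$ be an equivalence; this holds because $g_1$ and $g_2$ are nil-isomorphisms (the content of ``pseudo-nilpotent embedding'' relevant here) with $f\circ g_1=g_2$, so both $^{cl}X_i$ are canonically identified with $X_{0,cl}$.

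Before running the inductive step I would upgrade the cotangent hypothesis from classical test rings to all truncated ones. Rephrased, the hypothesis says that the relative cotangent complex satisfies $y^*L_{X_1/X_2}\simeq 0$ for every classical point $y\colon\operatorname{Spec}B\to X_1$: since $^{cl}X_1=X_{0,cl}$, every such $y$ is of the form $g_1\circ x_0$, and then the isomorphism $T^*_{x_2}(X_2)\to T^*_{x_1}(X_1)$ is precisely the statement that $x_1^*f^*L_{X_2}\to x_1^*L_{X_1}$ is an equivalence, i.e.\ that its cofiber $y^*L_{X_1/X_2}$ vanishes. I claim the same vanishing holds for every $n$-truncated point, by a second induction on $n$: writing $B'=\tau_{\le n-1}B$, $y'=y|_{\operatorname{Spec}B'}$, and noting that since $B$ is $n$-truncated the fiber of $B\to B'$ is the $n$-connective module $I=\pi_n(B)[n]$, the cofiber sequence $I\otimes_B P\to P\to B'\otimes_B P$ (with $P:=y^*L_{X_1/X_2}$) together with $B'\otimes_B P=y'^*L_{X_1/X_2}\simeq 0$ gives $P\simeq I^{\otimes_B k}\otimes_B P$ for all $k$. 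Since $I^{\otimes_B k}$ is $kn$-connective while $P$ is cohomologically bounded --- in the situation at hand (and whenever $X_1,X_2$ are locally almost of finite type) $P$ is the cofiber of a map between the $y$-pullbacks of $L_{X_1}$ and $f^*L_{X_2}$, which are coherent and bounded --- taking $k$ large forces $P\simeq 0$; this is a derived Nakayama argument.

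With the cotangent condition available at every truncation level, the inductive step is formal. For $n$-truncated $B$, present $B\twoheadrightarrow B'=\tau_{\le n-1}B$ as the square-zero extension classified by a derivation $L_{B'}\to M[1]$, $M=\pi_n(B)[n]$; infinitesimal cohesiveness of $X_i$ yields a pullback square with vertices $X_i(B)$, $X_i(B')$ (twice), and $X_i(B'\oplus M[1])$. The map $f$ is an equivalence on $X_i(B')$ by the inductive hypothesis on $n$; it is an equivalence on $X_i(B'\oplus M[1])$ because the latter fibers over $X_i(B')$ with fiber $\operatorname{Map}_{B'}(y'^*L_{X_i},M[1])$ over $y'$, and the vanishing $y'^*L_{X_1/X_2}\simeq 0$ identifies these fibers compatibly with $f$. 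Since $f$ induces an equivalence on all three non-initial vertices of the pullback squares for $i=1,2$, it induces an equivalence on $X_i(B)$; passing to the limit over the Postnikov tower via convergence, $f$ is an equivalence of prestacks.

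I expect the middle step --- propagating the cotangent-complex isomorphism from classical test rings to all truncated ones --- to be the main obstacle, since it is exactly there that finiteness of the moduli problem enters: one needs the $y$-pullbacks of $L_{X_1}$ and $f^*L_{X_2}$ to be coherent and cohomologically bounded so that the iterated cofiber sequence involving the highly connected tensor powers $I^{\otimes_B k}$ can be forced to degenerate. One should also pin down the precise meaning of ``pseudo-nilpotent embedding'' to secure the $n=0$ case, and check the compatibility of the infinitesimal-cohesiveness squares with $f$; both are formal given the deformation-theory formalism of \cite{GR10b}.
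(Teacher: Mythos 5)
The paper imports this statement from \cite{GR10b} without proof, so there is no in-paper argument to measure you against; on its own terms, your architecture --- reduce by convergence to $n$-truncated test rings, climb the Postnikov tower using square-zero extensions and infinitesimal cohesiveness, and propagate the cotangent-space isomorphism upward by a derived Nakayama argument --- is the right one and is essentially how Gaitsgory--Rozenblyum argue.

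There is, however, one genuine gap, and it sits exactly where you suspected: the meaning of ``pseudo-nilpotent embedding.'' Such an embedding does \emph{not} identify $X_{0,cl}$ with the classical truncation $^{cl}X_i$; it only says (roughly) that for every classical affine $S\to X_i$ the base change $S\times_{X_i}X_{0,cl}\hookrightarrow S$ is a nilpotent thickening (or a filtered colimit of such). Already $\operatorname{Spec}k\hookrightarrow\operatorname{Spec}k[\epsilon]/(\epsilon^2)$ is a nilpotent embedding between classical schemes, so the assertion ``both $^{cl}X_i$ are canonically identified with $X_{0,cl}$'' is false, and you rely on it twice: for the $n=0$ case of the main induction, and again to start the induction upgrading the cotangent hypothesis, where you claim every classical point of $X_1$ factors through $g_1$. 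The repair is one more induction of the same kind: for classical $S\to X_i$, let $I$ be the (nilpotent) ideal of $S_0=S\times_{X_i}X_{0,cl}$ in $S$; the chain $S_0=V(I)\subset V(I^2)\subset\cdots\subset S$ exhibits $S$ as an iterated square-zero extension of $S_0$, and infinitesimal cohesiveness handles each step provided the cotangent isomorphism is known at the intermediate points $V(I^k)\to X_i$. Those points do not factor through $X_{0,cl}$, so the hypothesis must first be transported to them: since the $X_i$ admit (pro-)cotangent complexes, $T^*$ at such a point restricts along the nilpotent embedding $S_0\hookrightarrow V(I^k)$ to $T^*$ at an $X_{0,cl}$-point, and restriction along a nilpotent embedding is conservative on the relevant bounded objects --- the same Nakayama mechanism you already invoke for the Postnikov direction. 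Your other caveat, that the derived Nakayama step needs the pullbacks of $\mathbb{L}_{X_1}$ and $f^*\mathbb{L}_{X_2}$ to be bounded and coherent, is correctly identified and is harmless in the paper's application, where both cotangent complexes are perfect of amplitude $[-2,1]$.
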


\subsection{Background on d-critical locus structures} \label{sec:dcrit}
D-critical locus structures are a main ingredient in the definition of motivic DT invariants \cite{BJM19}. In this section, we recall the definition of a d-critical locus structure in \cite{Joy15}.

Let $Y$ be a $\mathbb{C}$-scheme locally of finite type. 

\begin{thm} (\cite{Joy15}, Theorem 2.1)
	\label{thm1}
	There exists a sheaf $\mathcal{S}_Y$ of $\mathbb{C}$ vector spaces, uniquely characterized by two properties. 
	
	(i) Suppose $R\subset Y$ is a Zariski open subset of $Y$, and $i:R\hookrightarrow U$ a closed embedding in some smooth scheme $U$. Define the sheaf of ideals $I_{R, U}$ by the following exact sequence of vector spaces on $R$.
	\begin{equation*}
		0\rightarrow I_{R, U}\rightarrow i^{-1}(O_U)\rightarrow O_Y|_R\rightarrow 0.
	\end{equation*}
	
	Then there is an exact sequence of sheaves of vector spaces on $R$:
	\begin{equation*}
		0\rightarrow \mathcal{S}_Y|_R\xrightarrow{\iota_{R, U}} \frac{i^{-1}(O_U)}{I_{R, U}^2}\xrightarrow{d}\frac{i^{-1}(T^*U)}{I_{R, U}\cdot i^{-1}(T^*U)}\hspace{1mm},
	\end{equation*}
	where $\iota_{R, U}$ is a morphism of sheaves of vector spaces, and $d$ is induced by the exterior derivative.
	
	(ii) Let $R\subset S\subset Y$ be Zariski open inclusions, and $i: R\hookrightarrow U$, $j: S\hookrightarrow V$ closed embeddings in smooth schemes $U$ and $V$. Let $\Phi: U\to V$ be a morphism satisfying $\Phi\circ i=j|_R$. Then the following diagram commutes:
	\[\begin{tikzcd}
		0\arrow{r} & \mathcal{S}_Y|_R \arrow{r}{\iota_{S, V}|_R} \arrow[swap]{d}{id} & \frac{j^{-1}(O_V)}{I^2_{S, V}}|_R\arrow{r}{d} \arrow{d}{i^{-1}(\Phi^\sharp)} & \frac{j^{-1}(T^*V)}{I_{S, V}\cdot j^{-1}(T^*V)}|_R \arrow{d}{i^{-1}(d\Phi)}\\
		0\arrow{r} & \mathcal{S}_Y|_R \arrow{r}{\iota_{R, U}} & \frac{i^{-1}(O_U)}{I^2_{R, U}}\arrow{r}{d} & \frac{i^{-1}(T^*U)}{I_{R, U}\cdot i^{-1}(T^*U)}\hspace{1mm}.
	\end{tikzcd}
	\]
\end{thm}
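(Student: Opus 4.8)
The plan is to construct $\mathcal{S}_Y$ by gluing explicit local models and then to check that properties (i) and (ii) both pin it down and hold for the construction; since everything here only involves data modulo $I^2$, I would work throughout with completions of structure sheaves along closed subschemes, the essential geometric input being formal smoothness of the ambient schemes. For each Zariski open $R\subseteq Y$ and closed embedding $i:R\hookrightarrow U$ into a smooth scheme, put
\[
\mathcal{S}_{R,U}\ :=\ \ker\!\Big(d:\tfrac{i^{-1}(O_U)}{I_{R,U}^2}\longrightarrow\tfrac{i^{-1}(T^*U)}{I_{R,U}\cdot i^{-1}(T^*U)}\Big),
\]
a sheaf on $R$ (a kernel of a map of sheaves) depending only on the completion $\widehat{O_U}$ of $O_U$ along $I_{R,U}$. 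Because $Y$ is locally of finite type over $\mathbb{C}$, every point has a neighborhood $R$ embeddable as a closed subscheme of an affine space, so these models cover $Y$.

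\emph{The key lemma.} The technical heart is the claim that if $W$ and $W'$ are smooth, $k:R\hookrightarrow W$ is closed, and $\alpha,\beta:\widehat{W}_R\to W'$ are two morphisms from the formal completion with $\alpha|_R=\beta|_R$, then the induced maps $\mathcal{S}_{R,W'}\to\mathcal{S}_{R,W}$ coincide. I would prove this by choosing \'etale coordinates $z_1,\dots,z_m$ on $W'$, noting $\alpha^\sharp(z_j)-\beta^\sharp(z_j)\in\widehat{I}_{R,W}$, and Taylor-expanding: if $g$ is a local function on $W'$ representing a section of $\mathcal{S}_{R,W'}$ then $\partial g/\partial z_j\in I_{R,W'}$, so
\[
\alpha^\sharp(g)-\beta^\sharp(g)=\sum_j\beta^\sharp\!\Big(\tfrac{\partial g}{\partial z_j}\Big)\big(\alpha^\sharp(z_j)-\beta^\sharp(z_j)\big)+\big(\text{terms quadratic in }\alpha^\sharp(z_j)-\beta^\sharp(z_j)\big),
\]
and every summand lies in $I_{R,W}^2$, so $\alpha^\sharp\equiv\beta^\sharp$ mod $I_{R,W}^2$ on such $g$. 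A parallel, easier computation shows any such $\alpha^\sharp$ really does send $\mathcal{S}_{R,W'}$ into $\mathcal{S}_{R,W}$ and is well defined mod $I^2$.

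\emph{Gluing and the two properties.} Given local models $i:R\hookrightarrow U$ and $j:R\hookrightarrow V$, formal smoothness of $V$ lets me lift $j$ successively along the square-zero extensions $O_U/I_{R,U}^{n+1}\twoheadrightarrow O_U/I_{R,U}^{n}$ to a morphism $\Phi:\widehat{U}_R\to V$ with $\Phi|_R=j$, and symmetrically $\Psi:\widehat{V}_R\to U$ with $\Psi|_R=i$. These induce $\Phi^\sharp:\mathcal{S}_{R,V}\to\mathcal{S}_{R,U}$ and $\Psi^\sharp:\mathcal{S}_{R,U}\to\mathcal{S}_{R,V}$; since $\Psi\circ\Phi$ and $\mathrm{id}$ agree on $R$, and likewise $\Phi\circ\Psi$, the key lemma gives $\Phi^\sharp\circ\Psi^\sharp=\mathrm{id}$ and $\Psi^\sharp\circ\Phi^\sharp=\mathrm{id}$. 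The key lemma also shows $\Phi^\sharp$ is independent of the chosen lift, so we get canonical isomorphisms $\theta_{U,V}:\mathcal{S}_{R,U}\xrightarrow{\ \sim\ }\mathcal{S}_{R,V}$, which — comparing through a third model — satisfy the cocycle identity; hence the $\mathcal{S}_{R,U}$ glue to a sheaf $\mathcal{S}_Y$ on $Y$. Property (i) then holds by construction; property (ii) follows because for $\Phi:U\to V$ with $\Phi\circ i=j|_R$ the induced $i^{-1}(\Phi^\sharp)$ agrees with $\theta$ by one more application of the key lemma; and uniqueness is immediate, since (i) determines $\mathcal{S}_Y$ on a cover and (ii) forces the identifications on overlaps.

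\emph{Main obstacle.} The crux is the key lemma: running the Taylor expansion over a base $R$ that may be singular and non-reduced, tracking everything modulo $I_{R,U}^2$, and doing so at the level of formal completions. It is precisely this lemma that makes the construction canonical, and hence the gluing possible. A secondary point needing care is the existence of the comparison morphisms $\Phi,\Psi$ between formal neighborhoods — exactly where smoothness, in the guise of formal smoothness, of the ambient schemes enters.
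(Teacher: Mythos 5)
This theorem is quoted in the paper from \cite{Joy15} without proof, so there is no in-paper argument to compare against; measured against Joyce's original proof, your proposal is essentially the same: define $\mathcal{S}_{R,U}$ as the kernel of $d$, prove a key lemma that the comparison map induced by a morphism of ambient smooth schemes depends only on its restriction to $R$, and glue via the resulting canonical isomorphisms. Your only real deviation is producing the comparison morphisms on formal completions via formal smoothness rather than (as Joyce does) extending morphisms of affine schemes after embedding $V$ in affine space; since everything is computed modulo $I^2$ this is equivalent, and arguably cleaner because only a second-order lift is ever needed. One step worth tightening: the Taylor expansion of $g$ in \'etale coordinates $z_j$ is not literal (the $z_j$ do not generate $O_{W'}$), but the identity you need modulo $\widehat{I}^2$ follows cleanly from the observation that two lifts of the same map along the square-zero extension $\widehat{O}_{W,R}/\widehat{I}^2\twoheadrightarrow \widehat{O}_{W,R}/\widehat{I}$ differ by an $O_{W'}$-derivation valued in $\widehat{I}/\widehat{I}^2$, which kills $dg$ whenever $dg\in I_{S,W'}\cdot T^*W'$; with that substitution the argument is complete.
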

Let $\mathcal{S}_Y^0$ be the kernel of the composition 
\begin{equation*}
	\mathcal{S}_Y\rightarrow O_Y\rightarrow O_{Y_{red}}\hspace{1mm},
\end{equation*}
where the map $\mathcal{S}_Y\rightarrow O_Y$ is locally defined by composing $\iota_{R, U}$ with $i^{-1}(O_U)\rightarrow O_Y|_R$. 

Then the sheaf $\mathcal{S}_Y$ has a canonical decomposition 
\begin{equation*}
	\mathcal{S}_Y\simeq \mathbb{C}_Y\oplus \mathcal{S}_Y^0\hspace{1mm}.
\end{equation*}

\begin{defn} (\cite{Joy15} Definition 2.5)
	\label{dcrit}
	An algebraic d-critical locus over $\mathbb{C}$ is a pair $(Y, s)$, where $Y$ is a $\mathbb{C}$-scheme and $s\in H^0(\mathcal{S}_Y^0)$ such that the following is satisfied: for every point $y\in Y$, there is a Zariski open neighborhood $R$ of $y$ with a closed embedding $i:R\hookrightarrow U$ into a smooth scheme $U$, such that $i(R)=\{df=0\}\subset U$ for $f: U\rightarrow \mathbb{C}$ a regular function on $U$. Furthermore, $\iota_{R, U}(s|_R)=i^{-1}(f)+I_{R, U}^2$. 
\end{defn}
Using the notation in the definition, the charts $(R, U, f, i)$'s are called critical charts of $(Y, s)$. \cred{For the similar definition of a d-critical stack, see Section 3.2 in \cite{BBBBJ15}.}

\begin{eg} [Presentation of $\operatorname{Hilb}^n(\mathbb{C}^3)$ as a degeneracy locus \cite{BBS13}]
	\label{HilbnC3}
	
	To a subscheme $Q\subset \mathbb{C}^3$ with Hilbert polynomial $P_Q=n$ we can associate 
	an $n$-dimensional vector space $V_n=H^0(O_Q)$, three pairwise commuting linear maps 
	\begin{equation*}
		X, Y, Z: V_n\to V_n
	\end{equation*}
	defined as multiplication by $x$, $y$, $z\in \mathbb{C}[x, y, z]$, and a vector $v\in V_n$ corresponding to $1\in H^0(O_Q)$.  The vector $v$ is  cyclic for the action of $\mathbb{C}[X, Y, Z]$ on $V_n$: $\mathbb{C}[X,Y,Z]\cdot v=V_n$.  
	
	Now consider the space of triples of $n\times n$ matrices and a vector in $V_n$: 
	\begin{equation*}
		Hom(V_n, V_n)^3\times V_n. 
	\end{equation*}
	This space is a quasiprojective variety and admits a $GL_n$ action induced from the action of $GL_n$ on $V_n$. The character $\chi: GL_n\rightarrow \mathbb{C}^*$ defined by $\chi(g)=\operatorname{det}(g)$ defines a linearization of the trivial bundle. Let $U$ be the stable locus of the linearization. It turns out that $U$ consists of the points $(X, Y, Z, v)$ where $v$ is cyclic for the action of $X, Y, Z$. Consider the GIT quotient 
	\begin{equation*}
		\operatorname{NHilb}^n(\mathbb{C}^3):=Hom(V_n, V_n)^3\times V_n\sslash GL_n=U/GL_n
	\end{equation*}
	with respect to this linearization. 
	Let $W:\operatorname{NHilb}^n(\mathbb{C}^3)\rightarrow \mathbb{C}$ be the function on $\operatorname{NHilb}^n(\mathbb{C}^3)$ defined by $W(X, Y, Z, v)=\operatorname{tr}([X, Y]Z)$. The condition $\{dW=0\}$ is equivalent to the condition that $X$, $Y$ and $Z$ pairwise commute. Then 
	the locus $\{dW=0\}$ is isomorphic to $\operatorname{Hilb}^n(\mathbb{C}^3)$. Then 
\[
(\operatorname{Hilb}^n(\mathbb{C}^3), \operatorname{NHilb}^n(\mathbb{C}^3), W, i)
\] 
itself is a critical chart for $\operatorname{Hilb}^n(\mathbb{C}^3)$ which defines a d-critical locus structure on $\operatorname{Hilb}^n(\mathbb{C}^3)$.
	\end{eg}
Note that a construction similar to Example \ref{HilbnC3} also gives a presentation of the moduli stack of length $n$ sheaves on $\mathbb{C}^3$ as a degeneracy locus of a regular function on a smooth stack.

 Using this description, in \cite{KS21} we constructed a d-critical locus structure on $Hilb^n(\omega_S)$ for $S=\mathbb{P}^2$ or $S=\mathbb{F}_n$. In general, one obtains a d-critical locus structure if there is a $-1$-shifted symplectic structure on the derived moduli stack.
\begin{thm}(\cite{BBBBJ15}, Theorem 3.18)
	\label{Thm: BBBBJ15}
	Let $(\mathbf{M}, \omega_{\mathbf{M}})$ be a $-1$-shifted symplectic derived Artin $\mathbb{C}$-stack, and $M=t_0(\mathbf{M})$ the corresponding classical Artin $\mathbb{C}$-stack. Then there exists a unique d-critical structure $s\in H^0(S_M^0)$ on $M$, making $(M, s)$ into a d-critical stack with the property that:
	
	(a) For each point $p\in M$, there exists a smooth $\mathbb{C}$-scheme $U$ with dimension $dimH^0(\mathbb{L}_X|_p)$, a point $t\in U$, a regular function $f:U\rightarrow \mathbb{A}^1$ with $d_{dR}f|_t=0$, so that $T:=Crit(f)\subset U$ is a closed $\mathbb{C}$-subscheme with $t\in T$ and a morphism $\psi: T\rightarrow X$ which is smooth of relative dimension $dim H^1(\mathbb{L}_X|_p)$ with $\psi(t)=p$.
	
	(b) Let $s_T$ be the unique section in  $H^0(S_T^0)$ with $\iota_{T, U}(s_T)=i^{-1}(f)+I_{T, U}^2$, and $(T, s_T)$ is an algebraic d-critical locus. Then $s(T, \psi)=s_T$ in $H^0(S_T^0)$.
\end{thm}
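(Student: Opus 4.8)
The plan is to reduce to a local normal form and then glue, using that the sheaf $S_M$ (hence $S_M^0$) is defined for Artin stacks and satisfies smooth descent (see \cite{BBBBJ15}, \S 3.2). So it suffices to (1) produce, near each $p\in M$, a presentation of $M$ as the critical locus of a function on a smooth stack, from which one extracts both the scheme-level chart $(T,U,f,i)$ of (a) and a candidate section of $S_M^0$ locally; (2) check that these local sections are independent of the choices up to the stabilization moves allowed by Theorem \ref{thm1}(ii), so that they glue to a global section; (3) read off (a), (b) and uniqueness.

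For (1) I would invoke the derived Darboux theorem for $-1$-shifted symplectic derived Artin stacks: near $p$ there exist a smooth Artin $\mathbb{C}$-stack $\mathcal{U}$, a point $u\in\mathcal{U}$, a regular function $\hat f\colon\mathcal{U}\to\mathbb{A}^1$ with $d_{dR}\hat f|_u=0$, an open derived substack $\mathbf{M}'\subseteq\mathbf{M}$ containing $p$, and an equivalence $\mathbf{M}'\simeq\mathbf{Crit}(\hat f)$ identifying $\omega_{\mathbf{M}}$ with the canonical $-1$-shifted symplectic form of the derived critical locus of $\hat f$ in $\mathcal{U}$; moreover $\mathcal{U}$ may be taken minimal at $u$, so that $\dim H^i(\mathbb{L}_{\mathcal{U}}|_u)=\dim H^i(\mathbb{L}_{\mathbf{M}}|_p)$ for $i=0,1$. (That the derived critical locus of a function on a smooth Artin stack always carries such a form is a direct cotangent-complex computation; the content of the theorem is the converse, proved by applying the scheme case \cite{BBJ} to a smooth atlas and then descending.) Now choose a smooth atlas $\rho\colon U\to\mathcal{U}$ by a smooth scheme with a point $t\in U$ over $u$, set $f:=\hat f\circ\rho$, $T:=\operatorname{Crit}(f)=\{df=0\}\subseteq U$ with closed embedding $i\colon T\hookrightarrow U$, and observe $T=U\times_{\mathcal{U}}t_0(\mathbf{Crit}(\hat f))$, so the induced $\psi\colon T\to M$ is smooth because $\rho$ is; near $t$ its relative dimension equals that of $\rho$, namely $\dim H^1(\mathbb{L}_{\mathbf{M}}|_p)$, and $\dim U=\dim H^0(\mathbb{L}_{\mathbf{M}}|_p)$. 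Subtracting a locally constant function from $f$ we may assume $f|_{T_{red}}=0$; then \cite{BBJ}, applied to the $-1$-shifted symplectic derived scheme $\mathbf{Crit}(f)$, produces the canonical $s_T\in H^0(S_T^0)$ with $\iota_{T,U}(s_T)=i^{-1}(f)+I_{T,U}^2$ --- exactly the data of (a) and (b) for the chart $(T,\psi)$.

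For (2), on the overlap of two such Darboux charts the theorem identifies the presentations $\mathbf{Crit}(\hat f)$ and $\mathbf{Crit}(\hat f')$ compatibly with the symplectic forms; one then checks --- as in \cite{BBJ} for schemes, extended to stacks in \cite{BBBBJ15}, \S 3 --- that $\hat f$ and $\hat f'$ differ, after a local change of coordinates on the smooth stacks, by a nondegenerate quadratic form, and that this stabilization leaves the induced section of $S^0$ unchanged, by Theorem \ref{thm1}(ii) and its stacky analogue. Hence the local sections glue to a unique $s\in H^0(S_M^0)$ (the normalization $f|_{T_{red}}=0$ placing it in $S_M^0$ rather than merely $S_M$), and (a), (b) hold by construction. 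Uniqueness among d-critical structures admitting charts as in (a)--(b) follows because such a structure is determined by its pullbacks along a smooth atlas of $M$ and these are pinned down, chart by chart, by \cite{BBJ} and the comparison results of \cite{BBBBJ15}, \S 3. The main obstacle is the derived Darboux theorem used in (1), together with the stabilization-invariance of (2); granting these, the gluing and uniqueness are formal.
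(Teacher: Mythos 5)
This statement is quoted background ([BBBBJ15], Theorem 3.18); the paper gives no proof of it, so your sketch can only be compared with the argument in the cited source. Your overall strategy --- local Darboux normal form, extraction of a section of $\mathcal{S}^0$ from the local potential, compatibility on overlaps via stabilization, gluing by smooth descent, and uniqueness because the charts $(T,\psi)$ cover $M$ smoothly --- is the strategy actually used in [BBBBJ15], so the skeleton is right.

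One step as you state it is, however, stronger than what is available and would be a genuine gap if taken literally. In step (1) you invoke a Darboux theorem asserting that an open derived substack $\mathbf{M}'\subseteq\mathbf{M}$ is equivalent to $\mathbf{Crit}(\hat f)$ for a regular function $\hat f$ on a smooth Artin stack $\mathcal{U}$. No such statement is proved in [BBBBJ15] (nor, to our knowledge, elsewhere): their Darboux theorem for $-1$-shifted symplectic derived Artin stacks (Theorem 2.10 and Corollary 2.11 there) only produces, for each $p$, a standard form cdga $A$ minimal at $p$ together with a \emph{smooth atlas map} $\mathbf{Spec}(A^\bullet)\to\mathbf{M}$ and an identification $\mathbf{Spec}(A^\bullet)\simeq\mathbf{Crit}(H)$ for $H\in A(0)$ a function on the smooth affine scheme $\operatorname{Spec}(A(0))$; it does not realize any open substack of $\mathbf{M}$ itself as a critical locus on a smooth stack. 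Since in the very next sentence you pass to a smooth atlas $U\to\mathcal{U}$ and only ever use the scheme-level data $(T,U,f,i)$ with $\psi\colon T\to M$ smooth, the argument is repaired simply by quoting the atlas-level Darboux theorem directly; but as written the intermediate claim is an overreach. The remaining points --- that the ``gluing'' is along smooth (not open) maps and therefore needs the sheaf $\mathcal{S}_{M}$ on the site of schemes smooth over the stack with its descent property, and that compatibility on fiber products of charts reduces to the scheme case of [BBJ] plus stabilization invariance --- you have identified correctly, and they are exactly the content of Sections 2--3 of [BBBBJ15].
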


It is shown in \cite{PTVV13} that there is a $-1$ shifted symplectic structure for the derived moduli stack of perfect complexes on a compact Calabi-Yau 3-fold. For local toric Calabi-Yau 3-folds, we need to use the analogous result for a Calabi-Yau 3-category following \cite{BD19}. Recall the definition of non-commutative Calabi-Yau given in Definition \ref{Def: noncommCY}.

\begin{thm}(\cite{BD19}, Theorem 5.5)
	\label{Thm: BD19}
	Given a non-commutative Calabi-Yau $(\mathcal{C}, \theta)$ of dimension $d$, the moduli space of pseudo-perfect objects $M_\mathcal{C}$ has an induced symplectic form of degree $2-d$. 
\end{thm}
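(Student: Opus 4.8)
The plan is to realize $\omega$ as the \emph{transgression} of the negative cyclic class $\theta$ along the universal family over $M_{\mathcal{C}}$, and then to verify non-degeneracy fibrewise using the description of the tangent complex of $M_{\mathcal{C}}$ by shifted $\mathrm{Ext}$-complexes; morally this is the argument of \cite{PTVV13} carried out for an abstract Calabi-Yau category rather than $\mathrm{Perf}$ of a compact Calabi-Yau variety.

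The first input is the universal family. By the universal property of $M_{\mathcal{C}}$ (Proposition~\ref{Univfam} together with Theorem~\ref{Moritaequ}), the identity map of $M_{\mathcal{C}}$ classifies a dg-functor $u\colon\mathcal{C}^c\to\mathrm{Perf}(M_{\mathcal{C}})$. Smoothness of $\mathcal{C}$ lets one transfer the Calabi-Yau structure $\theta\colon k[d]\to HC^-(\mathcal{C})$ to a non-degenerate negative cyclic class on the pseudo-perfect subcategory $\mathcal{C}^c$, one which induces the Serre-type duality $\mathbb{R}\underline{\mathrm{Hom}}_{\mathcal{C}}(A,B)\simeq\mathbb{R}\underline{\mathrm{Hom}}_{\mathcal{C}}(B,A)^{\vee}[-d]$ for $A,B\in\mathcal{C}^c$. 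Pushing this class forward along $u$ via functoriality of negative cyclic homology produces a distinguished class in $HC^-(\mathrm{Perf}(M_{\mathcal{C}}))=HC^-(M_{\mathcal{C}})$. Finally one invokes the HKR-type identification, for a derived (Artin) stack $Y$ locally of finite presentation, of the negative cyclic complex of $Y$ with the completed de Rham complex $\bigl(\prod_{p\ge 0}\mathcal{A}^p(Y)[-p],\,d_{\mathrm{dR}}\bigr)$ — the non-commutative input of the PTVV theory of closed forms — applied to $Y=M_{\mathcal{C}}$, which is locally geometric and locally of finite presentation by \cite{TV07}. Taking the weight-$2$ component of the image of our class yields a \emph{closed} $2$-form $\omega$ on $M_{\mathcal{C}}$ of degree $2-d$. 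Closedness is automatic here precisely because we transgressed a class in $HC^-$ and not merely in Hochschild homology, which is why the Calabi-Yau structure is required to be a negative cyclic, not just a Hochschild, class; and naturality of every map in the construction is what makes $\omega$ a globally defined closed form, compatible over affine test schemes.

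It remains to prove non-degeneracy, i.e.\ that $\omega$ induces an equivalence $\mathbb{T}_{M_{\mathcal{C}}}\xrightarrow{\ \sim\ }\mathbb{L}_{M_{\mathcal{C}}}[2-d]$, which can be checked at a point. If $E\in\mathcal{C}^c$ corresponds to a point of $M_{\mathcal{C}}$ then $\mathbb{T}_{M_{\mathcal{C}},E}\simeq\mathbb{R}\underline{\mathrm{Hom}}_{\mathcal{C}}(E,E)[1]$ (the tangent complex computation of \cite{TV07}), and unwinding the construction of $\omega$ one sees that the induced pairing on tangent vectors is
\begin{equation*}
\mathbb{R}\underline{\mathrm{Hom}}_{\mathcal{C}}(E,E)[1]\otimes\mathbb{R}\underline{\mathrm{Hom}}_{\mathcal{C}}(E,E)[1]\xrightarrow{\ \circ\ }\mathbb{R}\underline{\mathrm{Hom}}_{\mathcal{C}}(E,E)[2]\xrightarrow{\ \mathrm{tr}\ }k[2-d],
\end{equation*}
where $\mathrm{tr}$ is the trace obtained from $\theta$. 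Perfectness of this pairing is exactly the non-degeneracy axiom for the Calabi-Yau structure (Definition~\ref{Def: noncommCY}), which amounts to the Serre-type duality above being an equivalence; applying that duality with $A=B=E$ (and shifting) gives precisely the required perfectness. A standard descent argument then promotes this pointwise statement to the statement over all affine test schemes, so $\omega$ is symplectic.

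The main obstacle is the construction step: making precise how a smooth Calabi-Yau structure on $\mathcal{C}$ — a non-degenerate class in $HC^-(\mathcal{C})$ — produces the data on $\mathcal{C}^c$ that transgresses (this requires care with the duality relating the Hochschild and cyclic invariants of $\mathcal{C}$ and of its pseudo-perfect subcategory, and with the degree conventions), together with enough functoriality that the transgressed $2$-form is genuinely a closed form in the sense of \cite{PTVV13} on the merely locally geometric stack $M_{\mathcal{C}}$. Once these are in place, non-degeneracy is a direct reformulation of the hypothesis on $\theta$.
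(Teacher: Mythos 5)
This theorem is quoted from \cite{BD19} (Theorem 5.5); the paper supplies no proof of its own, so your sketch can only be compared with the argument in the cited source. Your overall architecture --- transgress the negative cyclic class along the universal family, use the HKR/PTVV identification of the weight-two part of negative cyclic homology with closed $2$-forms of degree $2-d$ on a locally geometric, locally finitely presented stack, then verify non-degeneracy pointwise by reducing to the pairing $\mathbb{R}\underline{\mathrm{Hom}}(E,E)[1]^{\otimes 2}\xrightarrow{\circ}\mathbb{R}\underline{\mathrm{Hom}}(E,E)[2]\xrightarrow{\mathrm{tr}}k[2-d]$ --- is exactly the strategy of \cite{BD19} (and of \cite{PTVV13} in the geometric case), and the pointwise pairing you write down is the very Serre pairing this paper uses later when computing $\gamma^*\omega$. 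The non-degeneracy step is sound: since $\mathbb{T}_{M_{\mathcal{C}}}$ is perfect, being an equivalence onto $\mathbb{L}_{M_{\mathcal{C}}}[2-d]$ can be detected on geometric points, where it is precisely the non-degeneracy condition of Definition~\ref{Def: noncommCY}.

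The one step that does not go through as literally written is the ``transfer'' of $\theta$ to $\mathcal{C}^c$ followed by pushforward along $u$. Negative cyclic homology is covariant in dg functors, so the inclusion $\mathcal{C}^c\hookrightarrow\mathcal{C}$ induces $HC^-(\mathcal{C}^c)\to HC^-(\mathcal{C})$, i.e.\ a map in the wrong direction, and a (left) Calabi--Yau structure on a \emph{smooth} category is a class in $HC^-(\mathcal{C})$ trivializing the inverse dualizing bimodule, not a negative cyclic class on its pseudo-perfect subcategory. The mechanism of \cite{BD19} circumvents this: smoothness identifies the tangent complex as $\underline{\mathrm{Hom}}_{\mathcal{M}_{\mathcal{C}}}(\Upsilon F,\Upsilon F)[1]$ (Proposition~\ref{Equ:CotanCplx} in this paper), and $\theta$ is paired against the universal family through the evaluation between $\mathcal{C}$ and its bimodule dual, the cyclic refinement being what makes the resulting $2$-form closed. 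You correctly flag this as the main obstacle, but it is in fact the entire technical content of the cited proof, so your write-up should be regarded as a correct outline of the known argument rather than a complete proof.
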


We consider the case when $d=3$, and denote this 2-form of degree $-1$ by $\omega$. 

\section{An explicit description of moduli stack of zero dimensional sheaves on $\mathbb{C}^3$} \label{sec:explicit}

In this section, we give an explicit description of the derived moduli stack of zero dimensional sheaves on $\mathbb{C}^3$. While we were finishing up this project, we become aware of the recent preprint \cite{RS}, in which the main theorem of this section was proven by a direct construction.

Our approach is through the use of the Whitehead theorem for derived stacks reviewed in the previous section. We first exhibit a derived stack with the right classical truncation and a  self-dual cotangent complex of the expected form. Then we construct a universal family on this derived stack, and use it to construct a map to $\mathcal{M}_{\mathcal{C}}$. Finally we show that this map induces an isomorphism on cotangent complexes.  

\subsection{The derived stack $\mathfrak{X}$ and its cotangent complex}
Let $A_n=k[X^0(i, j), Y^0(i, j), Z^0(i, j)]$ for $1\leq i\leq n$, $1\leq j\leq n$ \footnote{We are mostly concerned with the case $k=\mathbb{C}$, but everything goes through for an arbitrary algebraically closed field of characteristic zero.}.
Let $W\in A_n$ be the potential defined by $W=tr(X^0[Y^0, Z^0])$.  It is natural to expect the Darboux form of \cite{BBJ} associated to $W$ to be relevant.
 Explicitly, $\partial W$ induces a linear map from $A_n^{3n^2}$ to $A_n$, and we have the associated Koszul complex:
\begin{equation*}
0\rightarrow \bigwedge^{3n^2}(A_n^{3n^2})\rightarrow...\rightarrow \bigwedge^3(A_n^{3n^2})\xrightarrow{d^{-3}}\bigwedge^2(A_n^{3n^2})\xrightarrow{d^{-2}}\bigwedge^1(A_n^{3n^2})\xrightarrow{d^{-1}=\partial W} A_n\rightarrow 0
\end{equation*}
Denote this complex by $A_n^\bullet$. Following the notation in \cite{BBJ}, we denote the generators in degree $i$ by $x^i_1,...,x^{i}_{m_i}$. Then $A_n^\bullet$ is a cdga free over $A_n^\bullet(0)=A_n$ generated by $3n^2$ generators in degree $-1$. We write the generators of $A_n^{3n^2}$ in the form of $X^{-1}(i, j)$, $Y^{-1}(i, j)$, $Z^{-1}(i, j)$, where $d(X^{-1}(i, j))=(Y^0Z^0-Z^0Y^0)^T(i, j)$ and similarly for $Y^{-1}(i, j)$, $Z^{-1}(i, j)$.


The module of K\"ahler differentials $\Omega^1_{A_n(0)}$ is generated by $d_{dR}X^0(i, j)$, $d_{dR}Y^0(i, j)$, $d_{dR}Z^0(i, j)$. Then by Example 2.3 in \cite{BBJ} we have $\Omega_{A_n^\bullet}^1[1]$ is generated by 
$d_{dR}X^0(i, j)$, $d_{dR}Y^0(i, j)$, $d_{dR}Z^0(i, j)$, $d_{dR}X^{-1}(i, j)$, $d_{dR}Y^{-1}(i, j)$, $d_{dR}Z^{-1}(i, j)$ as an $A_n^\bullet$-module. The differential in $\Omega^1_{A_n^\bullet}$ is given by
	\begin{equation}
		\label{d-1}
		d^{-1}(d_{dR}X^{-1})=(Y^0d_{dR}Z^0)^T+(d_{dR}Y^0Z^0)^T-(Z^0d_{dR}Y^0)^T-(d_{dR}Z^0Y^0)^T.
	\end{equation}


Now consider the cotangent complex of the derived stack $\mathfrak{X}_n=[{\bf Spec}(A_n^\bullet)/GL(n)]$. This is \cred{described in terms of
\begin{equation*}
h^*\mathbb{L}^\bullet_{\mathfrak{X}_n}=\Omega^1_{A_n^\bullet}\xrightarrow{d^0} \mathfrak{g}^\vee\otimes O_{Spec(A_n^\bullet)},
\end{equation*}
where $h:{\bf Spec}(A_n^\bullet)\to \mathfrak{X}_n$ is the canonical map.}
Here the map $d^0$ is induced by the $GL(n)$ action on $Spec(A_n^\bullet)$\ccred{.} See Section 3.3 for more details. 
Note that $\Omega^1_{A_n^\bullet}$ has generators in degree $-1$ and $0$, hence at this point $\mathbb{L}^\bullet_{\mathfrak{X}_n}$ is generated by elements in degree $-1, 0$ and $1$, so $\mathbb{L}^\bullet_{\mathfrak{X}_n}$ cannot be self-dual with a shift of $-1$. It is natural to use the Darboux form for Artin stacks from \cite{BBBBJ15}, making the cotangent complex self-dual by adding an additional $n^2$ generators $z^{-2}_1,..., z^{-2}_{n^2}$ in degree $-2$ to $A_n^\bullet$. The map $d^{-2}$ is again induced by the action of $GL(n)$, and it is the dual of $d_0$. After this fix, we have 
\begin{equation}
\label{cotcompd}
\mathbb{L}^\bullet_{\mathfrak{X}_n}\simeq O_{\mathfrak{X}_n}^{n^2}\rightarrow O_{\mathfrak{X}_n}^{3n^2}\rightarrow O_{\mathfrak{X}_n}^{3n^2}\rightarrow O_{\mathfrak{X}_n}^{n^2}.
\end{equation}
concentrated in degree $-2$, $-1$, $0$ and $1$ as a complex of $O_{\mathfrak{X}_n}$-modules.

\subsection{Universal family on $\mathfrak{X}_n$ and a map to $\mathcal{M}_{\mathcal{C}^c_{dg}(k[x, y, z])}$}

To apply the Whitehead Theorem, we need to provide a map $\mathfrak{X}_n\to \mathcal{M}_{\mathcal{C}^c_{dg}(k[x, y, z])}$ and then check that it induces an isomorphism of cotangent complexes.   An $A_n^\bullet\otimes k[x,y,z]$ module of length $n$ will induce such a map.   One could hope that endowing a projective $A_n^\bullet$-\cred{module} of length $n$ with a  $k[x,y,z]$-\cred{module} structure would suffice, but that turns out to be too optimistic.  Let's proceed naively at first and then modify the argument.

We could consider the 
$A_n^\bullet$ module $F_n:= A^\bullet\otimes_k V_n$, where $V_n$ is an $n$-dimensional vector space and try to define a $k[x,y,z]$-module structure on $F_n$ by letting $x,y,z$ be represented by $X^0,Y^0,Z^0$, respectively.  But that only works over $t_0(A^\bullet)$, since $X^0,Y^0,Z^0$ do not pairwise commute.  But we only need to check this commutativity up to homotopy.  The generators of $A^\bullet$ in degree $-1$ provide the required homotopy, but then higher homotopies are required because the commutators are not independent but satisfy the Jacobi identity.  These are handled by the generators of $A^\bullet$ in degree $-2$.  There are no higher homotopies.

More formally, we resolve $k[x, y, z]$ by a (noncommutative) dga $D^\bullet$ which is free as an algebra.  The free generators we need are generators $x,y,z$ in degree~0 (to map to the commuting $x,y,z$), degree $-1$ generators $u,v,w$ to map to the commutators, and a generator $t$ in degree $-2$ to implement the Jacobi identity.  The differentials of $D^\bullet$ are given by
\begin{equation}
\begin{split}
dx=dy=dz=0,\ du=yz-zy,\ dv=zx-xz,\ dw = xy-yx,\\ dt=(xu-ux)+(yv-vy)+(zw-wz).
\end{split}
\end{equation}
We replace $k[x,y,z]$ by $D^\bullet$ and give $F_n$ the structure of a $D^\bullet$-module. 

As before, we let $x,y$ and $z$ act on $F_n$ as $X^0,Y^0$, and $Z^0$, respectively.   To describe the actions of $u,v$, and $w$, we first let $\{e_i\}$ denote a basis for $V_n$ and then put 
\begin{equation}
u\cdot (1\otimes e_i)=\sum_j X^{-1}(j, i)\otimes e_j,
\end{equation} 
and extend the action of $u$ to all of $F_n$ by $A^\bullet$-linearity. Here \ccred{$X^{-1}(i,j)$ are the generators} of $A^\bullet$ in degree $-1$ \ccred{which were introduced in the previous section}. We have an analogous definition for the actions of $v$ and $w$.  By construction, the actions of $u,v$, and $w$ commute with $d$.

As for the action of $t$, we first identify the $n^2$ generators of $A^{-2}$ with symbols $T(i,j)$, so that 
\begin{equation}
dT(i,j)=\left(
[X^0,X^{-1}]+[Y^0,Y^{-1}]+[Z^0,Z^{-1}]
\right)(i,j).
\end{equation}  We then put
\begin{equation}
t\cdot (1\otimes e_i)=\sum_jT(i, j)\otimes e_i.
\end{equation}
and extend the action of $t$ to all of $F_n$ by $A^\bullet$-linearity.
By construction, the action of $t$ commutes with $d$.

This completes the description of the $D^\bullet$-module structure on $F_n$. 

\smallskip
To simplify notation, we denote $\mathcal{C}_{dg}^c(k[x, y, z])$ by $\mathcal{C}$, its subcategory of length $n$ sheaves by $\mathcal{C}^n$, and the substack of $\mathcal{M}_\mathcal{C}$ parametrizing length $n$ sheaves by $\mathcal{M}^n_{\mathcal{C}}$. Next we use the universal family $F_n$ to construct a $1$-morphism in $Map(\mathfrak{X}_n, \mathcal{M}^n_{\mathcal{C}})$. By Proposition \ref{Univfam}, we only need to construct a $1$-morphism in $Map_{dg-Cat_{op}}(L_{pe}(\mathfrak{X}_n), \mathcal{C}^n)$.

Let $\mathcal{D}_1$ and $\mathcal{D}_2$ in Theorem \ref{Moritaequ} be $L_{pe}(\mathfrak{X}_n)$ and $\mathcal{C}^n$ respectively. Since the universal family $F_n$ naturally \cred{carries} a $GL_n$ action, it defines a right quasi-representable $L_{pe}(\mathfrak{X}_n)\otimes (\mathcal{C}^n)^{op}$-module, hence defines a $1$-morphism in the mapping space $Map(\mathfrak{X}_n, \mathcal{M}^n_{\mathcal{C}})$.

We will compare the cotangent complexes in the next section.

\subsection{Comparison of the cotangent complexes of $\mathfrak{X}_n$ and $\mathcal{M}^n_{\mathcal{C}}$} 
We first review a bimodule resolution for the Jacobi algebra of a quiver which is Calabi-Yau. This resolution will be needed for computing the cotangent complex of $\mathcal{M}^n_{\mathcal{C}}$. We omit $n$ in the notation in this section. 

Recall that $[Spec(A))/GL(n)]$ can be realized as the moduli stack of $n$-dimensional representations of the following quiver $Q$:

\begin{center}
	\begin{tikzpicture}

		\node[circle, minimum size=0.1pt,inner sep=0pt,outer sep=0pt] (b) {$s_0$} edge [in=-50,out=-120,loop, scale=3] node[midway, fill=white, scale=0.8]{X} ();
		\node[circle, minimum size=0.1pt,inner sep=0pt,outer sep=0pt] (b) {$s_0$} edge [in=-10,out=50,loop, scale=3] node[midway, fill=white, scale=0.8]{Y} ();
		\node[circle,  minimum size=0.1pt,inner sep=0pt,outer sep=0pt] (b) {$s_0$} edge [in=130,out=70,loop, scale=3] node[midway, fill=white, scale=0.8]{Z} ();

	\end{tikzpicture}
\end{center}
Now the path algebra $kQ_1$ is isomorphic to $k<x, y, z>$, and we
let $w=tr(x[y, z])$ be the potential in $kQ_1$. We denote $kQ_1/(\partial w)$ by $C$.  
Since $C$ is a Calabi-Yau algebra, the following complex from \cite{Gin} gives a bimodule resolution of $C$: 
\begin{equation}
\label{bimodres}
0\rightarrow (C\otimes C)^R\xrightarrow{\alpha^{-2}} C\otimes E^*\otimes C\xrightarrow{\alpha^{-1}}C\otimes E\otimes C\xrightarrow{\alpha^0} C\otimes C\rightarrow C\rightarrow 0,
\end{equation}
where $E$ is the vector space of edges in the quiver, and all the tensors below are over the idempotent subring $R\subset B$.

We have 
\begin{equation}
\label{alpha0}
\alpha^0: 1\otimes x\otimes 1\mapsto x\otimes 1-1\otimes x,
\end{equation}
\begin{equation*}
\alpha^{-1:} 1\otimes x^*\otimes 1\mapsto y\otimes z\otimes 1+1\otimes y\otimes z- z\otimes y\otimes 1-1\otimes z\otimes y,
\end{equation*}
with similar formulae for $\alpha^{-1}(1\otimes y^*\otimes 1)$ and $\alpha^{-1}(1\otimes z^*\otimes 1)$, and

 \begin{equation*}
	\alpha^{-2}:1\otimes 1\mapsto 1\otimes x^*\otimes x-x\otimes x^*\otimes 1+1\otimes y^*\otimes y-y\otimes y^*\otimes 1+1\otimes z^*\otimes z-z\otimes z^*\otimes 1.
\end{equation*}
\bigskip

Recall the following notion from \cite{BD19}, \cite{GR10a}. Let $\mathcal{A}$ be a monoidal dg-category. Let $\mathcal{B}$ be an $\mathcal{A}$-module category. 
For any object $b\in \mathcal{B}$, denote the right adjoint to the the functor $\_\otimes b: \mathcal{A}\rightarrow\mathcal{B}$ by $\underline{Hom}_{\mathcal{A}}(b, \_)$.  Let $U$ be an affine derived scheme, and define the natural transformation
\begin{equation*}
\Upsilon: QCoh(-)^*\rightarrow IndCoh(-)^!
\end{equation*}
by 
\begin{equation*}
	\Upsilon_U:-\otimes \omega_U: QCoh(U)\rightarrow IndCoh(U).
\end{equation*}
Then one has:
\begin{prop} [\cite{BD19}, Proposition 3.3]
			\label{Equ:CotanCplx}
	\begin{equation*}
		\mathbb{T}(\mathcal{M}_\mathcal{C})[-1]\simeq \underline{Hom}_{\mathcal{M}_{\mathcal{C}}}(\Upsilon F, \Upsilon F).
	\end{equation*}
\end{prop}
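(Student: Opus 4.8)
The plan is to reduce to the pointwise tangent complex formula of To\"{e}n--Vaqui\'{e}, globalize it by means of the universal family, and then repackage the answer in the $IndCoh$ formalism via $\Upsilon$. I take as classical input the following: by the functor-of-points description $\mathcal{M}_\mathcal{C}(A)=Map_{dg-Cat}(\mathcal{C}^c,Perf(A))$, a point of $\mathcal{M}_\mathcal{C}$ is (the functor corresponding to) a pseudo-perfect $\mathcal{C}$-module $E$, and deformation theory identifies the first-order deformations of $E$ along an $A$-module $M$ with $\mathbb{R}\mathrm{Hom}_{\mathcal{C}^c}(E,E\otimes_A M)[1]$, whence $\mathbb{T}_{\mathcal{M}_\mathcal{C}}|_E\simeq\mathbb{R}\mathrm{Hom}_{\mathcal{C}^c}(E,E)[1]$. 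The work is to promote this to an identification of complexes on $\mathcal{M}_\mathcal{C}$ and to recognize the right-hand side as $\underline{Hom}_{\mathcal{M}_\mathcal{C}}(\Upsilon F,\Upsilon F)$.

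First I would produce the universal family: applying Theorem~\ref{Moritaequ} (together with Proposition~\ref{Univfam}) to $\mathrm{id}_{\mathcal{M}_\mathcal{C}}$ gives the universal right quasi-representable $\mathcal{C}^c$-module $F$, i.e. the universal functor $\mathcal{C}^c\to Perf(\mathcal{M}_\mathcal{C})$, which restricts to $E$ at the point $E$. Because $F$ is pseudo-perfect over $\mathcal{C}^c$, the internal Hom $\underline{Hom}_{\mathcal{M}_\mathcal{C}}(F,F)$ (the right adjoint to $-\otimes F$ in the relevant module category) is a perfect complex on $\mathcal{M}_\mathcal{C}$ whose formation commutes with base change along maps from derived affine schemes. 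Both $\mathbb{T}_{\mathcal{M}_\mathcal{C}}$ and $\underline{Hom}_{\mathcal{M}_\mathcal{C}}(F,F)[1]$ being perfect, the pointwise formula combined with this base-change compatibility identifies their pullbacks to every derived affine $S\to\mathcal{M}_\mathcal{C}$, compatibly with further pullback; descent then yields $\mathbb{T}_{\mathcal{M}_\mathcal{C}}[-1]\simeq\underline{Hom}_{\mathcal{M}_\mathcal{C}}(F,F)$ globally in $QCoh(\mathcal{M}_\mathcal{C})$.

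It then remains to identify $\underline{Hom}_{\mathcal{M}_\mathcal{C}}(F,F)$, computed in $QCoh$-module categories, with $\underline{Hom}_{\mathcal{M}_\mathcal{C}}(\Upsilon F,\Upsilon F)$, where $\Upsilon_{\mathcal{M}_\mathcal{C}}=-\otimes\omega_{\mathcal{M}_\mathcal{C}}$ transports $F$ to $IndCoh(\mathcal{M}_\mathcal{C})$. Since $\Upsilon$ is $QCoh$-linear and the two internal Homs are right adjoints to the respective module actions, this comes down to checking that $\Upsilon$ is compatible with those adjunctions; on affines it reduces to the interaction of twisting by the dualizing complex with Serre duality for the structure map, and it is exactly here that the normalization of the shift by $-1$ and the placement of the $\omega$-twist get pinned down. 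I expect this last step to be the main obstacle: the first two steps are a routine globalization of the To\"{e}n--Vaqui\'{e} computation, but reconciling the $QCoh$ internal-Hom description with the $IndCoh$ formulation, while keeping the $QCoh$/$IndCoh$ bookkeeping and grading conventions straight so that the answer comes out exactly as $\mathbb{T}(\mathcal{M}_\mathcal{C})[-1]$, requires care.
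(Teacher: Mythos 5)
This proposition is not proved in the paper at all: it is quoted verbatim from [BD19, Proposition~3.3] and used as a black box, so there is no in-paper argument to measure your proposal against. Judged against the source, your outline follows the right general strategy: the functor of points $\mathcal{M}_\mathcal{C}(A)=Map_{dg\text{-}Cat}(\mathcal{C}^c,Perf(A))$ reduces the computation of $\mathbb{T}(\mathcal{M}_\mathcal{C})$ along an affine point to the deformation theory of the corresponding $\mathcal{C}$-module (the To\"en--Vaqui\'e formula $\mathbb{R}\underline{Hom}_{\mathcal{C}}(E,E)[1]$), and the global statement is assembled from the universal family $F$.

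Two places in your sketch gloss over the actual content. First, ``pointwise formula plus base-change compatibility plus descent'' is not yet a proof: a family of pointwise isomorphisms is not a morphism. What one actually does is construct a single global map --- the Kodaira--Spencer/action map from $\underline{Hom}_{\mathcal{M}_{\mathcal{C}}}(\Upsilon F,\Upsilon F)$ to $\mathbb{T}(\mathcal{M}_\mathcal{C})[-1]$ furnished by the adjunction defining $\underline{Hom}_{\mathcal{A}}(b,-)$ --- and then checks affine-locally that it is an equivalence; this is also where the (very) smoothness of $\mathcal{C}$ from Definition~\ref{Def: noncommCY} is used, to ensure $\mathcal{M}_\mathcal{C}$ admits deformation theory and that the internal Hom of the pseudo-perfect family is perfect and compatible with base change. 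Second, your explanation of why $\Upsilon$ appears is off target: it is not a matter of Serre duality for a structure map. The point is that in the Gaitsgory--Rozenblyum formalism the (pro-)cotangent and tangent complexes of a prestack admitting deformation theory live naturally in $IndCoh$ via $!$-pullback, whereas the universal family $F$ lives in $QCoh$; $\Upsilon=-\otimes\omega$ is the canonical transport from the one to the other, and the identification of $\underline{Hom}(\Upsilon F,\Upsilon F)$ with the $\Upsilon$-image of the $QCoh$ internal Hom is a formal compatibility of $\Upsilon$ with the module-category adjunctions, not a duality computation. With those two repairs your sketch is a fair reconstruction of the cited proof, but as written the last step would not close.
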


\cred{Denote} $GL_n$ by $G$. Take $\mathcal{A}$ to be 
\begin{equation*}
	IndCoh(\mathfrak{X})=IndCoh([{\bf Spec}(A^\bullet)/G]),
\end{equation*} and take $\mathcal{B}$ to be 
\begin{equation*}
	IndCoh([{\bf Spec}(A^\bullet\otimes_kk[x, y, z])/G]).
\end{equation*}
Note that $IndCoh([{\bf Spec}(A^\bullet)/GL_n])\simeq D^G(A^\bullet)$, where $D^G(A^\bullet)$ is the equivariant derived infinity category of $A^\bullet$-modules.
Take the object $b$ to be $F$. We denote the map induced by the universal object $F$ by $f: \mathfrak{X}\rightarrow \mathcal{M}_{\mathcal{C}}$. We are trying to show $g: \mathbb{T}_{\mathfrak{X}}^\bullet\rightarrow f^*\mathbb{T}(\mathcal{M}_\mathcal{C})$ is an equivalence. Note that $g$ is an equivalence in $D^G(A^\bullet)$ if its projection as a morphism in \ccred{its} homotopy category $[D^G(A^\bullet)]$ is an isomorphism. So we only need to work with $[D^G(A^\bullet)]$. Since we already have the map $g$, we only need to show the isomorphism after pullback to $[D(A^\bullet)]$, which is \cred{$D^b(A^\bullet)$},
the usual derived category of $A^\bullet$-modules. We use the same notation for the projection of every object in the homotopy category and its lift in $[D(A^\bullet)]$.

Then by Proposition \ref{Equ:CotanCplx}, we have  
\begin{equation*}
	f^*\mathbb{T}(\mathcal{M}_\mathcal{C})[-1]\simeq \mathbb{R}Hom_{A^\bullet\otimes_kk[x, y, z]}(F, F)
\end{equation*}
as an isomorphism in $D^b(A^\bullet\otimes_kk[x, y, z])$. We use Ginzburg's bimodule resolution (\ref{bimodres}) to \cred{give} an explicit representative of $RHom_{A^\bullet\otimes_kk[x, y, z]}(F, F)$.

Tensoring the resolution (\ref{bimodres}) by $B^\bullet:= A^\bullet\otimes k[x, y, z]$ over $k[x, y, z]$ on the left and on the right, we get a bimodule resolution of $A^\bullet \otimes k[x, y, z]$:

\begin{equation}
	\label{bimodresB}
	0\rightarrow (B^\bullet\otimes B^\bullet)^R\xrightarrow{\alpha^{-2}} B^\bullet\otimes E^*\otimes B^\bullet\xrightarrow{\alpha^{-1}}B^\bullet\otimes E\otimes B^\bullet\xrightarrow{\alpha^0} B^\bullet\otimes B^\bullet\rightarrow B^\bullet\rightarrow 0,
\end{equation}
Then we have the following quasi-isomorphism in $D^b(A^\bullet\otimes k[x, y, z])$  (\cred{See an analogous discussion in} Section 2.1 in \cite{Shi18}). 

\begin{equation}
	\label{Ext}
	\begin{split}
		RHom_{A^\bullet\otimes k[x, y, z]}(F, F)&\simeq F^*\otimes F\xrightarrow{\alpha^{-2}} F^*\otimes E^*\otimes F\xrightarrow{\alpha^{-1}}F\otimes E\otimes F^*\xrightarrow{\alpha^{-1}}F\otimes F^*\\
		&\simeq 
		\begin{array}{c}
			F^*\otimes F\\
		\end{array}
		\xrightarrow{\alpha^{-2}}
		\begin{array}{c}  F^*\otimes x^* \otimes F\\ 
			\oplus F^*\otimes y^* \otimes F \\ 
			\oplus F^*\otimes z^* \otimes F \\
		\end{array}
		\xrightarrow{\alpha^{-1}}
		\begin{array}{c} F\otimes x \otimes F^*\\ 
			\oplus F\otimes y \otimes F^* \\ 
			\oplus F\otimes z \otimes F^* \\
		\end{array}
		\xrightarrow{\alpha^{0}}
		\begin{array}{c}
			F\otimes F^*\\
		\end{array}
	\end{split}
\end{equation}

We denote this complex by $L^\bullet$. The generators of $RHom_{A^\bullet\otimes k[x, y, z]}(F, F)$ as an $A^\bullet$ module are in degrees $0$, $1$, $2$, and $3$. 

\begin{lem}
	\label{isocotanv2}
		\begin{equation*}
			[g]: \mathbb{T}^\bullet_{\mathfrak{X}}\rightarrow f^*\mathbb{T}(\mathcal{M}_\mathcal{C})\simeq RHom_{A^\bullet\otimes k[x, y, z]}(F, F)[1]
		\end{equation*}
		is an isomorphism in $D^b(A^\bullet)$.
\end{lem}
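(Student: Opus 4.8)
The plan is to write $[g]$ explicitly as a map of complexes of free $A^\bullet$-modules and to check that it is an isomorphism in each cohomological degree; since both sides are complexes of free (hence projective) $A^\bullet$-modules, a degreewise isomorphism of chain complexes is an isomorphism in $D^b(A^\bullet)$, which is exactly what we want. (We have already reduced to working in $D^b(A^\bullet)$, forgetting the $GL(n)$-equivariance.)

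First I would fix the two models, both concentrated in degrees $-1,0,1,2$. The source $\mathbb{T}^\bullet_{\mathfrak{X}}$ is the $A^\bullet$-linear dual of the complex (\ref{cotcompd}), hence
\[
\mathbb{T}^\bullet_{\mathfrak{X}}\;\simeq\;O_{\mathfrak{X}}^{n^2}\longrightarrow O_{\mathfrak{X}}^{3n^2}\longrightarrow O_{\mathfrak{X}}^{3n^2}\longrightarrow O_{\mathfrak{X}}^{n^2},
\]
with generators in degrees $-1,0,1,2$ given respectively by a basis of $\mathfrak{gl}_n$, the duals of $d_{dR}X^0(i,j),d_{dR}Y^0(i,j),d_{dR}Z^0(i,j)$, the duals of $d_{dR}X^{-1}(i,j),d_{dR}Y^{-1}(i,j),d_{dR}Z^{-1}(i,j)$, and the duals of the degree $-2$ generators $T(i,j)$; its differentials are the transposes of $d^{-2}$, of the differential (\ref{d-1}) of $\Omega^1_{A^\bullet}$, and of the map $d^0$ induced by the $GL(n)$-action. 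The target, by Proposition \ref{Equ:CotanCplx} together with the base-changed Ginzburg resolution (\ref{bimodresB}), is represented by the complex $L^\bullet$ of (\ref{Ext}), a four-term complex of free $A^\bullet$-modules of graded ranks $n^2,3n^2,3n^2,n^2$ with differentials $\alpha^{-2},\alpha^{-1},\alpha^0$ as in (\ref{alpha0}); after the shift $[1]$ it too sits in degrees $-1,0,1,2$.

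Next I would exhibit $[g]$ via the Kodaira--Spencer description. Since $f$ is classified by the universal $D^\bullet$-module $F$, the derivative of $f$ is computed by first-order deformations of the $D^\bullet$-action on $F$: perturbing the matrices $X^0,Y^0,Z^0$ by a $k[\epsilon]/(\epsilon^2)$-multiple of a matrix with entries $d_{dR}X^0(i,j),\dots$ is exactly the deformation of $F$ obtained by pushing out along the $x$-, $y$-, $z$-components of $\alpha^0$. Thus in degree $0$ the map $[g]$ identifies the tangent vector $\partial/\partial X^0(i,j)$ with the basis element of $F^*\otimes E^*\otimes F$ indexed by $(i,j)$ and the edge $x$ (and likewise for $y,z$); in degree $-1$ it is the canonical identification $\mathfrak{gl}_n\cong F^*\otimes F$; and in degrees $1$ and $2$ it is governed in the same way by the degree $-1$ homotopy generators $u,v,w$ of $D^\bullet$ (matching the duals of $d_{dR}X^{-1}(i,j),\dots$ with $F\otimes E\otimes F^*$) and by the degree $-2$ Jacobi-homotopy generator $t$ (matching the duals of $T(i,j)$ with $F\otimes F^*$). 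In each degree this is visibly an isomorphism of free $A^\bullet$-modules, up to the natural transposes and signs; equivalently, one may first establish the dual statement $[g^*]\colon f^*\mathbb{L}(\mathcal{M}_\mathcal{C})\xrightarrow{\ \sim\ }\mathbb{L}^\bullet_{\mathfrak{X}}[1]$ and dualize.

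It then remains to verify that the resulting maps $\varphi^{-1},\varphi^0,\varphi^1,\varphi^2$ commute with the differentials, and I expect this to be the only real work. The squares in degrees $-1$ and $0$ follow immediately from the definitions of the $GL(n)$-action and of the $x,y,z$-action on $F$. The squares involving $\varphi^1$ and $\varphi^2$ are where the Calabi--Yau structure enters: they encode the compatibility of Ginzburg's bimodule resolution of $k[x,y,z]$ with the Koszul/Darboux presentation determined by the potential $W=\operatorname{tr}(X^0[Y^0,Z^0])$. Concretely one must match the terms of
\[
\alpha^{-1}(1\otimes x^*\otimes 1)=y\otimes z\otimes 1+1\otimes y\otimes z-z\otimes y\otimes 1-1\otimes z\otimes y
\]
(and its $y,z$ analogues) against the Koszul differential $d(X^{-1})=(Y^0Z^0-Z^0Y^0)^T$ together with the differential (\ref{d-1}) of $\Omega^1_{A^\bullet}$, and match $\alpha^{-2}$ against $dT=[X^0,X^{-1}]+[Y^0,Y^{-1}]+[Z^0,Z^{-1}]$. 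This is a finite check once one keeps careful track of the transposes and of the tensor products over the idempotent subring appearing in (\ref{bimodresB}). Granting it, $[g]$ is a degreewise isomorphism of complexes of free $A^\bullet$-modules, hence an isomorphism in $D^b(A^\bullet)$, which completes the proof.
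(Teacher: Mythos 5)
Your proposal is correct and follows essentially the same route as the paper: both represent the target by the (base-changed) Ginzburg bimodule resolution, use freeness to realize $[g]$ as an honest chain map, compute it on generators via the Kodaira--Spencer/first-order-deformation description (perturbing the $D^\bullet$-action on $F$), and conclude that the resulting $\varphi^{-1},\varphi^0,\varphi^1,\varphi^2$ give a degreewise isomorphism of complexes of free $A^\bullet$-modules. The only difference is one of emphasis: you flag the commutation with differentials as the remaining finite check, whereas the paper treats it as immediate from the formulas for the $\alpha^i$ and $d^i$.
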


\begin{proof}
	We can view $F$ as an $A^\bullet\otimes D^\bullet$-module with a $\operatorname{GL}(n)$-action.  
	We represent $\mathbb{T}^\bullet_{\mathfrak{X}}$ by the dual of (\ref{cotcompd}).   We describe the pullback of $\mathbb{T}^\bullet_{\mathfrak{X}}$ to $\operatorname{Spec}(A^\bullet)$ as the cone of the map $\mathcal{O}_{{\bf Spec}(A^\bullet)}^{n^2}\rightarrow \mathbb{T}^\bullet_{{\bf Spec}(A^\bullet)}$ associated to the infinitesimal $\operatorname{GL}(n)$-action.  
	
Since $\mathbb{T}^\bullet_{\mathfrak{X}}$ is a complex of free modules, we can regard $[g]$ as \cgreen{an} actual map between complexes. We write down the definition of $[g]$ on generators explicitly. Let $\tilde{A}^\bullet$ and $\tilde{D}^\bullet$ be the graded rings underlying $A^\bullet$ and $D^\bullet$ respectively.  We consider generators \cgreen{of $\mathbb{T}^\bullet_{\mathfrak{X}}$ in degree 0, e.g.\ }the vector field $d/dX^0(i,j)$ on $\operatorname{Spec}(\tilde{A}^\bullet)$. Viewing $F$ as a $\tilde{A}^\bullet\otimes\tilde{D}^\bullet$-module $\tilde{F}$, this vector field defines a first-order deformation $\mathcal{F}$ of $\tilde{F}$, a module over $\tilde{A}^\bullet\otimes\tilde{D}^\bullet\otimes k[\epsilon]/(\epsilon^2)$, flat over $k[\epsilon]/(\epsilon^2)$.  The action of $\tilde{D}^\bullet$ on $\mathcal{F}$ is defined by letting $t,u,v,w,y,z$ act on $\mathcal{F}$ exactly as they did for $F$, while the action of $x$ is replaced by matrix multiplication by $X^0+\epsilon E_{ij}$, where $E_{ij}$ is the matrix whose $(i,j)$ entry is 1 with all other entries vanishing.
	
	We get an extension
	\begin{equation}\label{x0ijext}
		0\to \tilde{F} \to \mathcal{F}\to \tilde{F}\to 0.
	\end{equation}
	We resolve $\tilde{F}$ by free $G^\bullet=\tilde{A}^\bullet\otimes\tilde{D}^\bullet$-modules
	\begin{equation}
		\label{resG}
		F^\bullet: 0\rightarrow G^\bullet\otimes F \rightarrow G^\bullet \otimes E^* \otimes F \rightarrow F \otimes E \otimes G^\bullet 
		\rightarrow F\otimes G^\bullet \rightarrow \tilde{F}\rightarrow 0.
	\end{equation}
	Let $\{f_i\}_i$ be a set of basis \cgreen{elements} for $F$.
	Using this resolution, the extension class of (\ref{x0ijext}) is represented by the map $F \otimes E \otimes G^\bullet \rightarrow F\otimes G^\bullet$ induced by $f_i^*\otimes x^*\otimes f_j$.   This term is naturally identified with the corresponding term in the Ginzburg resolution.
	
	We can perform a similar calculation for the other vector fields corresponding to the generators of $A^\bullet$, as well as for the infinitesimal $\operatorname{GL}(n)$ corresponding to $\mathcal{O}_{{\bf Spec}(A^\bullet)}^{n^2}$ mentioned above. \cgreen{We find that $d/dX^{-1}(i,j)$ maps to $f_i\otimes x \otimes f_j^*$, $d/dz^{-2}_{ij}$ maps to $f_i\otimes f_j^*$, and similarly for the vector fields associated with \ccgreen {$Y$ and $Z$}.} By the definition of pushforward of vector fields and the Kodaira-Spencer map to $RHom_{A^\bullet\otimes k[x, y, z]}(F, F)$, we see that the above calculation indeed is the map $[g]$ on generators.
	
		Thus we have the following diagram:
	\[\begin{tikzcd}	
		O_{\mathfrak{X}}^{n^2}\arrow{r}{d^{-1}}\arrow{d}{\varphi^{-1}} & O_{\mathfrak{X}}^{3n^2}\arrow{r}{d^{0}}\arrow{d}{\varphi^{0}}  & O_{\mathfrak{X}}^{3n^2}\arrow{r}{d^{1}}\arrow{d}{\varphi^{1}} & O_{\mathfrak{X}}^{n^2}\arrow{d}{\varphi^2}\\
		F^*\otimes F \arrow{r}{\alpha^{-1}} & F^*\otimes E^*\otimes F\arrow{r}{\alpha^{0}} & F\otimes E\otimes F^*\arrow{r}{\alpha^1}&F\otimes F^*
	\end{tikzcd},
	\]
where the top row is identified with $\mathbb{T}^\bullet_{\mathfrak{X}}$, and $[g]$ is explicitly given by the maps $\varphi^i$. Using the definition of $\alpha^i$ and $d^i$, it is easy to see $\varphi^i$'s indeed define an isomorphism between complexes.
\end{proof}
We immediately conclude the main result of this section.
\begin{thm}
	The universal family $F$ induces an isomorphism of derived stacks
\begin{equation*}
	f: \mathfrak{X}\rightarrow \mathcal{M}_\mathcal{C}.
\end{equation*}	
\end{thm}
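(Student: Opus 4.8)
The plan is to apply the Whitehead theorem for derived stacks (Theorem~\ref{whitehead}) to the map $f: \mathfrak{X}\to \mathcal{M}_\mathcal{C}$ constructed from the universal family $F$. The strategy requires assembling four ingredients. First, I would take $X_{0,cl}$ to be the classical moduli stack of length-$n$ sheaves on $\mathbb{C}^3$, equivalently $\operatorname{NHilb}^n$-type quotient stack presented in Example~\ref{HilbnC3}; one needs to identify it with $t_0(\mathfrak{X})$ on one side and with $t_0(\mathcal{M}_\mathcal{C})$ on the other. Second, I would exhibit the pseudo-nilpotent embeddings $g_1: X_{0,cl}\hookrightarrow \mathfrak{X}$ and $g_2: X_{0,cl}\hookrightarrow \mathcal{M}_\mathcal{C}$: the former is the canonical truncation inclusion $t_0(\mathfrak{X})\hookrightarrow \mathfrak{X}$, while the latter is the analogous truncation inclusion for the derived moduli stack. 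Third, one checks the compatibility $f\circ g_1 = g_2$, which follows from the construction of $f$ via the $D^\bullet$-module $F_n$: the classical truncation of the $D^\bullet$-module structure recovers the commuting matrices $(X^0,Y^0,Z^0)$ and the cyclic vector, i.e.\ exactly the classical parametrization. Fourth, and this is the technical heart, one verifies that $f$ induces an isomorphism on cotangent complexes at every classical point. This last point is precisely the content of Lemma~\ref{isocotanv2}, which establishes $[g]:\mathbb{T}^\bullet_{\mathfrak{X}}\xrightarrow{\sim} f^*\mathbb{T}(\mathcal{M}_\mathcal{C})$ as an isomorphism in $D^b(A^\bullet)$, hence (by $G$-equivariance) in $[D^G(A^\bullet)]$, and therefore fiberwise at each classical point $x_0: S\to X_{0,cl}$.

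The key steps, in order, would be: (1) Set up the diagram of stacks, identifying $X_{0,cl}$ with the relevant classical moduli stack and constructing the two embeddings $g_i$; here one invokes that both $\mathfrak{X}=[{\bf Spec}(A_n^\bullet)/GL(n)]$ and $\mathcal{M}_\mathcal{C}$ lie in $PreStk_{def}$ — the former because it is a quotient of a derived affine scheme by an algebraic group, the latter by the representability result of \cite{TV07} together with \cite{BD19}. (2) Verify that $g_1, g_2$ are pseudo-nilpotent embeddings: each is the inclusion of the classical truncation, so this is a formal property once one checks the classical truncations of $\mathfrak{X}$ and $\mathcal{M}_\mathcal{C}$ both equal $X_{0,cl}$. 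On the $\mathfrak{X}$ side this amounts to $t_0(A_n^\bullet)=A_n$, which holds since the negative-degree generators of $A_n^\bullet$ impose no relations in degree $0$ beyond those already present — wait, more precisely $t_0$ of the Koszul-type cdga $A_n^\bullet$ is $A_n/(\text{image of }d^{-1}) = A_n/(\partial W)$, which is exactly the classical degeneracy locus $\operatorname{Hilb}^n(\mathbb{C}^3)$ (upstairs, before the quotient), matching $t_0(\mathcal{M}_\mathcal{C})$. (3) Check $f\circ g_1 = g_2$ by comparing the $D^\bullet$-module structures after classical truncation. (4) Apply Lemma~\ref{isocotanv2} to conclude the cotangent-complex hypothesis of Theorem~\ref{whitehead}, noting that an isomorphism in $D^b(A^\bullet)$ of the global cotangent complexes restricts to an isomorphism $T^*_{x_2}(X_2)\to T^*_{x_1}(X_1)$ at every classical point by base change along $x_0$.

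The main obstacle I expect is step (4) as fed into Whitehead, specifically two subtleties. The first is the identification $f^*\mathbb{T}(\mathcal{M}_\mathcal{C})\simeq RHom_{A^\bullet\otimes k[x,y,z]}(F,F)[1]$ with the correct functoriality: one must be careful that Proposition~\ref{Equ:CotanCplx} of \cite{BD19} is being applied with the $-1$-shifted symplectic (hence self-dual) structure from Theorem~\ref{Thm: BD19}, so that the four-term complex $L^\bullet$ is genuinely self-dual up to the shift, matching the self-dual form \eqref{cotcompd} of $\mathbb{L}^\bullet_{\mathfrak{X}}$ that was engineered by adding the degree $-2$ generators. The comparison map $[g]$ must respect this duality, which is why the explicit identification of generators in Lemma~\ref{isocotanv2} (vector fields $d/dX^0(i,j)$, $d/dX^{-1}(i,j)$, $d/dz^{-2}_{ij}$ mapping to the Ginzburg-resolution terms $f_i^*\otimes x^*\otimes f_j$, $f_i\otimes x\otimes f_j^*$, $f_i\otimes f_j^*$) is essential rather than cosmetic. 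The second subtlety is purely bookkeeping but error-prone: passing from the isomorphism in $[D^G(A^\bullet)]$ to an isomorphism of fiberwise cotangent complexes at an arbitrary classical $S$-point $x_0: S\to X_{0,cl}$, which requires knowing the cotangent complex commutes with the relevant base change — true because $\mathfrak{X}$ and $\mathcal{M}_\mathcal{C}$ admit deformation theory, but worth stating. Once these are in place, Theorem~\ref{whitehead} gives that $f$ is an isomorphism, completing the proof.
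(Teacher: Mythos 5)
Your proposal is correct and follows essentially the same route as the paper: the proof there is exactly an application of Theorem~\ref{whitehead} with the cotangent-complex hypothesis supplied by Lemma~\ref{isocotanv2}. Your additional verifications (identifying $t_0(\mathfrak{X})$ and $t_0(\mathcal{M}_\mathcal{C})$ with the classical moduli stack, checking the pseudo-nilpotent embeddings and $f\circ g_1=g_2$) are details the paper leaves implicit, not a different argument.
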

\begin{proof}
	This follows from Lemma \ref{isocotanv2} and Theorem \ref{whitehead}. Indeed, lemma \ref{isocotanv2} implies that $[g]$ is an isomorphism in $[D(A^\bullet)]$, hence $g:\mathbb{T}_{\mathfrak{X}}^\bullet\rightarrow f^*\mathbb{T}(\mathcal{M}_\mathcal{C})$ is an equivalence. Then Theorem \ref{whitehead} implies the desired result.
\end{proof}

\section{d-critical locus structure on $Hilb^n(\omega_S)$}\label{sec:toric}

In this section we work with local toric Calabi-Yau 3-folds. 
Let $S$ be a smooth and projective toric surface, and let $\omega_S$ be the total space of the canonical bundle of $S$. Consider the dg-category of complexes of coherent sheaves with compact support on $\omega_S$. 
We denote the stack parametrizing length $n$ sheaves on $\omega_S$ by $\mathcal{M}_{\omega_S}^n$, and its classical trunction $t_0(\mathcal{M}_{\omega_S}^n)$ by $M_{\omega_S}^n$.  By Theorem \ref{Thm: BBBBJ15}, there is a canonical d-critical structure on $M_{\omega_S}^n$ as a truncation of the $-1$-shifted symplectic structure on $\mathcal{M}_{\omega_S}^n$. We show that this d-critical locus structure has critical charts $(\operatorname{Hilb}^n(\mathbb{C}^3), \operatorname{NHilb}^n(\mathbb{C}^3), W, i)$ as in Example \ref{HilbnC3}. Again, we omit $n$ in the notation.

\smallskip\noindent
{\bf Remark.} The authors of \cite{RS} provided an analytic description of a d-critical locus structure on quot schemes of a compact Calabi-Yau threefold.  We have restricted attention to local toric surfaces so that we can work algebraically.

\smallskip
By Theorem 2.10 and Corollary 2.11 in \cite{BBBBJ15}, a critical chart for the truncated d-critical locus structure is constructed from a minimal standard form open neighbourhood 
\begin{equation*}
	(R, \gamma: {\bf U}=\mathbf{Spec}(R)\rightarrow {\bf X}, \widetilde{p})
\end{equation*}
of $p$, for $p$ a point in $M_{\omega_S}^n$. Choosing $H$, $\Phi$ and $\phi$ as in Theorem 2.10 \cite{BBBBJ15}, the critical chart is defined by $(Crit(H), Spec(R(0)), H, i)$.

The following Proposition shows that every point $p\in \mathcal{M}_{\omega_S}$ has a standard form open neighbourhood equivalent to $\mathfrak{X}$, where $\mathfrak{X}$ is defined in Section 3.1. 
\begin{prop}
	\label{cover}
	The stack $\mathcal{M}_{\omega_S}$ admits a cover by open substacks isomorphic to $\mathfrak{X}$.
\end{prop}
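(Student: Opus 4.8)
The plan is to localize the problem to $\mathbb{C}^3$ and then invoke the isomorphism $\mathfrak{X}\simeq \mathcal{M}_{\mathcal{C}^c_{dg}(k[x,y,z])}$ just established. The starting observation is that a length-$n$ sheaf $\mathcal{F}$ on $\omega_S$ has zero-dimensional (hence finite) support, so there is a toric affine chart $\mathbb{C}^3\subset \omega_S$ containing $\mathrm{supp}(\mathcal{F})$; indeed, since $\omega_S$ is a smooth toric 3-fold, it is covered by its toric charts $U_\sigma\cong\mathbb{C}^3$ corresponding to the maximal cones $\sigma$, and a finite set of closed points is contained in such a chart (after possibly using a torus translation, or simply because for any finite subset of $\omega_S$ there is a maximal cone whose chart contains it — one can also pass to a $T$-translate). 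First I would make precise the assignment sending an open $V\subset\omega_S$ isomorphic to $\mathbb{C}^3$ to the open substack $\mathcal{M}_{\omega_S}^{n}\big|_V\subset\mathcal{M}_{\omega_S}^n$ parametrizing length-$n$ sheaves set-theoretically supported on $V$; openness here is the statement that "support contained in $V$" is an open condition on flat families of zero-dimensional sheaves, which follows from properness of the support morphism (the support of a flat family is proper over the base, so the locus where it avoids the closed complement $\omega_S\setminus V$ is open). These open substacks cover $\mathcal{M}_{\omega_S}^n$ as $V$ ranges over the toric charts (and their $T$-translates).

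Next I would identify each such open substack with $\mathfrak{X}$. A length-$n$ sheaf supported on $V\cong\mathbb{C}^3$ is the same data as a length-$n$ module over $\mathcal{O}_V\cong k[x,y,z]$ supported at finitely many points, i.e. a pseudo-perfect object of $\mathcal{C}^c_{dg}(k[x,y,z])$ of length $n$; this identification is functorial in families over an arbitrary derived affine test scheme, because restriction of coherent sheaves along the open immersion $V\hookrightarrow\omega_S$ is compatible with base change, and flatness/length is preserved. Hence the derived substack $\mathcal{M}_{\omega_S}^{n}\big|_V$ represents the same moduli functor as $\mathcal{M}^n_{\mathcal{C}^c_{dg}(k[x,y,z])}$, so there is a canonical equivalence $\mathcal{M}_{\omega_S}^{n}\big|_V\simeq \mathcal{M}^n_{\mathcal{C}^c_{dg}(k[x,y,z])}$. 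Composing with the inverse of the isomorphism $f:\mathfrak{X}\xrightarrow{\ \sim\ }\mathcal{M}^n_{\mathcal{C}^c_{dg}(k[x,y,z])}$ from the main theorem of Section~\ref{sec:explicit} (here using the notational conventions $\mathcal{C}=\mathcal{C}^c_{dg}(k[x,y,z])$ and $\mathcal{M}^n_\mathcal{C}$ introduced above) yields the desired open substack of $\mathcal{M}_{\omega_S}$ isomorphic to $\mathfrak{X}$.

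Finally, I would note that the union of these substacks is all of $\mathcal{M}_{\omega_S}$: given any $\mathbb{C}$-point, i.e. a length-$n$ sheaf $\mathcal{F}$, its finite support lies in some toric chart $V$, so the point lies in $\mathcal{M}_{\omega_S}^{n}\big|_V$; since this holds for every point and each $\mathcal{M}_{\omega_S}^{n}\big|_V$ is open, they form an open cover. The main obstacle I anticipate is not the abstract identification but the bookkeeping needed to realize these as \emph{standard form} open neighbourhoods in the precise sense of Theorem 2.10 and Corollary 2.11 of \cite{BBBBJ15} — that is, to check that the presentation $\mathfrak{X}=[\mathbf{Spec}(A^\bullet)/GL(n)]$ together with its self-dual $(-1)$-shifted cotangent complex \eqref{cotcompd} actually satisfies the minimality and Darboux-chart hypotheses at a given point $p$, so that the induced critical chart is exactly $(\operatorname{Hilb}^n(\mathbb{C}^3),\operatorname{NHilb}^n(\mathbb{C}^3),W,i)$. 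This requires matching the GIT presentation $\operatorname{NHilb}^n(\mathbb{C}^3)=\mathrm{Crit}(W)$ of Example~\ref{HilbnC3} with the Darboux model, and will be carried out in the discussion following this proposition; the equivalence of derived stacks proved here is what makes that matching possible.
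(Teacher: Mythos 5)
The overall strategy---reduce to an open $V\subset\omega_S$ isomorphic to $\mathbb{C}^3$, identify $\mathcal{M}^n_{\omega_S}|_V$ with $\mathcal{M}^n_{\mathcal{C}^c_{dg}(k[x,y,z])}$, and compose with the isomorphism $f:\mathfrak{X}\to\mathcal{M}^n_{\mathcal{C}}$ from Section 3---is correct and is what the paper intends. But your very first step, that the support of a length-$n$ sheaf is always contained in a \emph{toric} chart $U_\sigma\cong\mathbb{C}^3$ of $\omega_S$, is false, and this is exactly the point the paper's proof is devoted to. Already for $S=\mathbb{P}^2$ the three torus-fixed points lie in no single maximal-cone chart, and passing to a $T$-translate does not help because toric charts are $T$-invariant; a sheaf supported over those three points would lie in none of your proposed opens, so your family does not cover. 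What is actually needed is a cover of $S$ by open subsets $V_\alpha\cong\mathbb{C}^2$, in general \emph{not} toric charts, with the property that every finite subset of $S$ is contained in a single $V_\alpha$; one then takes $\pi^{-1}(V_\alpha)\cong\mathbb{C}^3$ (triviality of $\omega_S$ over $V_\alpha\cong\mathbb{C}^2$ is used here, and working with preimages under $\pi$ is what lets you control the full three-dimensional support by controlling only its projection to $S$). For $\mathbb{P}^2$ these are complements of suitably chosen lines; for a general smooth projective toric surface the paper constructs them by induction along the sequence of blowups $S=X_m\to\cdots\to X_0$ with $X_0=\mathbb{P}^2$ or $\mathbb{F}_n$: a chart containing the center $q$ of a blowup is replaced by the opens of $Bl_{q}\mathbb{C}^2$ obtained by deleting a single fiber of $Bl_{q}\mathbb{C}^2\to\mathbb{P}^1$, each of which is again isomorphic to $\mathbb{C}^2$, and a finite subset always misses some fiber. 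This existence statement is the real content of the proposition, and it is missing from your argument.

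Your remaining steps---openness of the locus where the support of a flat family of zero-dimensional sheaves avoids a closed subset, the functorial identification of $\mathcal{M}^n_{\omega_S}|_V$ with the moduli of length-$n$ $k[x,y,z]$-modules, and the appeal to the theorem of Section~\ref{sec:explicit}---are sound and are in fact spelled out more carefully than in the paper, which treats this part implicitly. The concluding discussion about standard form neighbourhoods and Darboux charts is also correctly deferred to the material following the proposition. So the gap is localized entirely in the choice of opens: replace the toric charts by the inductively constructed cover $\{V_\alpha\}$ and the proof goes through.
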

\begin{proof}
	Denote the projection map by $\pi:\omega_S\rightarrow S$. We recall our strategy employed in \cite{KS21}.  We need only show that there exists an open cover of $S$ by open subsets $\{V_\alpha\}$ isomorphic to $\mathbb{C}^2$ with the property that for any finite subset of $Z\subset S$, $Z$ is contained in one of our open subsets $V_\alpha$.  Applying this to the projection to $S$ of the support of a zero-dimensional sheaf $G$, we see that $G$ is contained in $M^n_{U_\alpha}$, where $U_\alpha=\pi^{-1}(V_\alpha)\simeq\mathbb{C}^3$.
	
	We describe such a cover $\{V_\alpha\}$ by an inductive procedure. Recall that all toric surfaces can be obtained by successively blowing up $\mathbb{P}^2$ or $\mathbb{F}_n$ at torus invariant points. We denote this process by:
	\begin{equation*}
		S=X_m\xrightarrow{f_n} X_{m-1} \rightarrow...\xrightarrow{f_1} X_0,
	\end{equation*}
	where $X_0$ is $\mathbb{P}^2$ or $\mathbb{F}_n$, and $f_j$ represents the blowup of $X_{j-1}$ at a torus invariant point $q_{j-1}$. We know from \cite{KS21} that $X_0$ admits a cover $\{V_0^i\}$, such that the charts $\{M^n_{\pi^{-1}(V_0^i)}\}$ cover $M^n_{\omega_{X_0}}$. Now assume that $X_{j-1}$ admits such a cover. Let $I$ be the index subset consisting of $i$ such that $q_{j-1}\in V_{j-1}^i$. Then we have $f_j^{-1}(V_{j-1}^i)\simeq \mathbb{C}^2$ for $i\notin I$, and $f_j^{-1}(V_{j-1}^{i'})\simeq Bl_{q_{j-1}}\mathbb{C}^2$ for $i'\in I$. Let $V_j^{i',k}$ be the open subset of $f_j^{-1}(V_{j-1}^{i'})$ obtained by removing the fiber over $k\in \mathbb{P}^1$. Then $\{f_j^{-1}(V_{j-1}^i)\}_{i\notin I}\cup \{V_j^{i', k}\}_{i'\in I}$ defines a cover of $X_i$ as required. 
\end{proof}
Now we write down the pullback of the $2$-form $\omega$ to ${\bf Spec}(A^\bullet)$ explicitly: 
\begin{lem}
		\begin{equation*}
			\begin{split}
				\gamma^*\omega=&tr(d_{dR}X^{-1}\wedge (d_{dR}X^{0})^T+d_{dR}Y^{-1}\wedge (d_{dR}Y^0)^T+d_{dR}Z^{-1}\wedge (d_{dR}Z^0)^T)\\
				=&\sum_i\sum_jd_{dR}X^{-1}(i, j)\wedge d_{dR}X^0(i, j)+\sum_i\sum_jd_{dR}Y^{-1}(i, j)\wedge d_{dR}Y^0(i, j)\\
				&+\sum_i\sum_jd_{dR}Z^{-1}(i, j)\wedge d_{dR}Z^0(i, j)
			\end{split}
		\end{equation*}
\end{lem}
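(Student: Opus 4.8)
The plan is to transport the $-1$-shifted symplectic form $\omega$ from $\mathcal{M}_{\omega_S}$ (equivalently, via the isomorphism $f\colon\mathfrak{X}\xrightarrow{\sim}\mathcal{M}_\mathcal{C}$, from $\mathcal{M}_\mathcal{C}$) back to $\mathbf{Spec}(A^\bullet)$, using the explicit comparison of (co)tangent complexes from Lemma~\ref{isocotanv2}. By Theorem~\ref{Thm: BD19} and Proposition~\ref{Equ:CotanCplx}, the underlying $2$-form of $\omega$ is the canonical nondegenerate pairing on $\mathbb{T}(\mathcal{M}_\mathcal{C})[-1]\simeq\underline{Hom}_{\mathcal{M}_\mathcal{C}}(\Upsilon F,\Upsilon F)$ induced by the Calabi--Yau structure on $\mathcal{C}$; pulling back along $f$ and invoking Lemma~\ref{isocotanv2}, this becomes the trace pairing
\begin{equation*}
RHom_{A^\bullet\otimes k[x,y,z]}(F,F)\otimes RHom_{A^\bullet\otimes k[x,y,z]}(F,F)\longrightarrow RHom_{A^\bullet\otimes k[x,y,z]}(F,F)\xrightarrow{\ \operatorname{tr}\ }A^\bullet[-3]
\end{equation*}
coming from the Calabi--Yau algebra $C=kQ_1/(\partial w)$. (Restriction to the open substack $\mathfrak{X}\subset\mathcal{M}_{\omega_S}$ is compatible with the symplectic forms by the functoriality of the construction in \cite{BD19}, the relevant open charts of $\omega_S$ being isomorphic to $\mathbb{C}^3$.)

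First I would make this pairing explicit on the model $L^\bullet$ of (\ref{Ext}). Ginzburg's bimodule resolution (\ref{bimodres}) is a self-dual resolution of $C$ up to the shift $[3]$, reflecting the Calabi--Yau property of $C$ (\cite{Gin}), and the formula (\ref{alpha0}) together with the ensuing formulas for $\alpha^{-1}$ and $\alpha^{-2}$ exhibit $\alpha^{-2}$ as the dual of $\alpha^{0}$ and $\alpha^{-1}$ as self-dual, up to sign. Consequently the trace pairing reduces on $L^\bullet$ to the tautological evaluation pairing: in the bases $\{f_i\}$ of $F$, $\{f_i^*\}$ of $F^*$, $\{x,y,z\}$ of $E$ and $\{x^*,y^*,z^*\}$ of $E^*$, the degree-$0$ term $F^*\otimes F$ pairs perfectly with the degree-$3$ term $F\otimes F^*$, and the degree-$1$ term $F^*\otimes E^*\otimes F$ with the degree-$2$ term $F\otimes E\otimes F^*$, via $\langle f_i^*,f_j\rangle=\delta_{ij}$ and $\langle x^*,x\rangle=\langle y^*,y\rangle=\langle z^*,z\rangle=1$, all remaining pairings of basis vectors of $E^*\times E$ being zero. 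In particular this pairing has constant coefficients in these bases.

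Next I would feed in Lemma~\ref{isocotanv2}. After restriction to $\mathbf{Spec}(A^\bullet)$, the isomorphism $[g]\colon\mathbb{T}^\bullet_{\mathfrak{X}}\to RHom_{A^\bullet\otimes k[x,y,z]}(F,F)[1]$ identifies $d/dX^0(i,j)$, $d/dX^{-1}(i,j)$ and $d/dz^{-2}_{ij}$ (and the $Y$- and $Z$-analogues) with the corresponding basis elements of the degree-$1$, degree-$2$ and degree-$3$ terms of $L^\bullet$ written down in the proof of Lemma~\ref{isocotanv2}; the infinitesimal $GL(n)$-directions are identified with the degree-$0$ term and play no role here, since $\Omega^1_{A^\bullet}=\mathbb{L}_{\mathbf{Spec}(A^\bullet)}$ has generators only in degrees $0$, $-1$ and $-2$. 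Composing with the explicit pairing of the previous step, one reads off that the only nonzero pairings between generators of $\Omega^1_{A^\bullet}$ are those of $d_{dR}X^{-1}(i,j)$ with $d_{dR}X^0(i,j)$, and likewise for $Y$ and $Z$, each with coefficient $1$, while all cross terms vanish because $\langle x^*,y\rangle=0$, etc. Dualizing this pairing gives
\begin{equation*}
\gamma^*\omega=\sum_{i,j}\bigl(d_{dR}X^{-1}(i,j)\wedge d_{dR}X^0(i,j)+d_{dR}Y^{-1}(i,j)\wedge d_{dR}Y^0(i,j)+d_{dR}Z^{-1}(i,j)\wedge d_{dR}Z^0(i,j)\bigr),
\end{equation*}
and rewriting the three sums as traces of matrix products recovers the first of the two expressions in the statement. (Implicitly this describes the underlying $2$-form of $\omega$; in this Darboux-type model the higher closedness data can be taken to vanish.)

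The step I expect to be the main obstacle is the combination of index bookkeeping and normalization: matching the abstract Calabi--Yau trace pairing with the tautological one on the Ginzburg resolution while keeping track of transposes --- this is precisely where the transposes $(d_{dR}X^0)^T$, etc., in the trace expression originate --- and pinning down the overall constant. For the latter, the cyclic symmetry of $W=\operatorname{tr}(X^0[Y^0,Z^0])$ in $X^0$, $Y^0$, $Z^0$ forces the three coefficients to be equal, and the standard normalization of the Calabi--Yau structure on a Jacobi algebra --- equivalently, a comparison with the Darboux model of \cite{BBJ} --- fixes the common value to $1$. A minor additional point is that a priori the degree-$(+1)$ part $\mathfrak{g}^\vee$ of $\mathbb{L}_{\mathfrak{X}}$ could pair with $d_{dR}T$ in degree $-2$; this contribution does not appear in $\gamma^*\omega$ because $\gamma$ factors through the smooth atlas and $\mathfrak{g}^\vee$ maps to zero in $\Omega^1_{A^\bullet}$, so by a degree count only the $(0,-1)$ terms survive.
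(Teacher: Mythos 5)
Your proposal is correct and follows essentially the same route as the paper: identify $\omega$ with the Serre/trace pairing on $RHom_{A^\bullet\otimes k[x,y,z]}(F,F)$ via \cite{BD19}, make that pairing explicit on the Ginzburg bimodule resolution (the paper does this by directly composing the matrix representatives $M$, $N$ of classes in $Hom(F^\bullet,F^\bullet[1])$ and $Hom(F^\bullet,F^\bullet[2])$ and taking the trace, which yields exactly your tautological evaluation pairing), and then transport back to $\mathbb{T}^\bullet_{\mathfrak{X}}$ through the isomorphisms $\varphi^i$ of Lemma~\ref{isocotanv2}. Your extra remarks on normalization and on why the $\mathfrak{g}^\vee$-directions do not contribute are consistent with, though not spelled out in, the paper's argument.
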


\begin{proof}
	By Corollary 2.5 and Proposition 5.3 in \cite{BD19}, the closed $2$-form $\omega$ is identified with the Serre pairing:
	\begin{equation*}
		\label{Serrepair}
		\underline{End}_{\mathcal{M}_{\omega_S}^n}(F_\mathcal{C})^{\otimes 2}\xrightarrow{\circ}\underline{End}_{\mathcal{M}_{\omega_S}^n}(F_\mathcal{C})\xrightarrow{tr}O_{\mathcal{M}_{\omega_S}^n}.
	\end{equation*}
	
	
	Hence $\gamma^*(\omega)$ is identitied with the Serre pairing on $RHom_{A^\bullet\otimes k[x, y, z]}(F, F)$.
	Consider the presentation of $RHom_{A^\bullet\otimes k[x, y, z]}(F, F)$ by the bimodule resolution (\ref{Ext}). Recall that we \cgreen{denoted the resolution of $F$ in (\ref{resG})} by $F^\bullet$.
	We choose an element $\phi$ in $Hom(F^\bullet, F^\bullet[1])$, and write $M=(M_X, M_Y, M_Z)^T$ with $M_X(i, j)$ the matrix corresponding to \cgreen{the basis element} $f_i^*\otimes X^*\otimes f_j$, and similarly for $M_Y$ and $M_Z$. We choose another element $N$ in $Hom(F^\bullet, F^\bullet[2])$, and write $\psi=(N_X, N_Y, N_Z)$ with $N_X(i, j)$ the matrix corresponding to \cgreen{the basis element} $f_j\otimes X\otimes f_i^*$ and similarly for $N_Y$ and $N_Z$. Now we have 
	\begin{equation*}
		tr(N\circ M)=tr(N_XM_X+N_YM_Y+N_ZM_Z)
	\end{equation*} 
	We get the same formular if we take the trace of composition of an element in $Hom(F^\bullet, F^\bullet[2])$ and $Hom(F^\bullet, F^\bullet[1])$.
	Using the \cred{isomorphisms} $\varphi^{0}$ and $\varphi^{1}$ in the commutative diagram, we obtain the $2$-form in degree $-1$ as desired. 
\end{proof}

By Theorem 2.10 (a) in \cite{BBBBJ15}, the critical chart is defined by $(Crit(H), Spec(A(0)), H, i)$, where $H=-\Phi$, and 
\begin{equation}
	\label{relation}
	\begin{split}
		\omega\sim (d_{dR}(\phi), 0, 0, 0...)\\
		d_{dR}(\Phi)+d\phi=0.
	\end{split}
\end{equation}
\begin{lem}
	We can choose the superpotential induced by $\omega$ by $\Phi=-trX_0[Y_0, Z_0]$.
\end{lem}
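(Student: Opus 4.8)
The plan is to exhibit an explicit cohomological degree $-1$ one-form $\phi\in\Omega^1_{A^\bullet}$ for which the pair $(\Phi,\phi)$, with $\Phi=-W=-\operatorname{tr}X^0[Y^0,Z^0]$, satisfies both relations in (\ref{relation}); by Theorem 2.10 of \cite{BBBBJ15} this is precisely what it means for $-W$ to be a valid choice of the superpotential induced by $\omega$, and then $H=-\Phi=W$ produces the critical chart $(Crit(W),\operatorname{Spec}(A(0)),W,i)$ of Example~\ref{HilbnC3}. Guided by the shape of $\gamma^*\omega$ computed in the previous lemma, the natural candidate is
\begin{equation*}
\phi=\sum_{i,j}\Big(X^{-1}(i,j)\,d_{dR}X^0(i,j)+Y^{-1}(i,j)\,d_{dR}Y^0(i,j)+Z^{-1}(i,j)\,d_{dR}Z^0(i,j)\Big),
\end{equation*}
which lies in degree $-1$ since $X^{-1}(i,j),Y^{-1}(i,j),Z^{-1}(i,j)$ are the degree $-1$ generators of $A^\bullet$ while $d_{dR}X^0(i,j)$, etc., have degree $0$.

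For the first relation I would apply $d_{dR}$ to $\phi$: Leibniz produces, from each summand, a term $d_{dR}X^{-1}(i,j)\wedge d_{dR}X^0(i,j)$ together with a term proportional to $d_{dR}^2X^0(i,j)=0$, so that
\begin{equation*}
d_{dR}\phi=\sum_{i,j}\Big(d_{dR}X^{-1}(i,j)\wedge d_{dR}X^0(i,j)+d_{dR}Y^{-1}(i,j)\wedge d_{dR}Y^0(i,j)+d_{dR}Z^{-1}(i,j)\wedge d_{dR}Z^0(i,j)\Big).
\end{equation*}
This is exactly the expression for $\gamma^*\omega$ obtained in the previous lemma, so $\gamma^*\omega=d_{dR}\phi$ and in particular $\omega\sim(d_{dR}\phi,0,0,\dots)$.

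For the second relation, the internal differential $d$ acts on $\phi$ only through the degree $-1$ generators: $d$ commutes with $d_{dR}$ and $d\big(d_{dR}X^0(i,j)\big)=d_{dR}\big(dX^0(i,j)\big)=0$, since $X^0$ (and likewise $Y^0,Z^0$) is a degree $0$ generator annihilated by $d$. Using $d(X^{-1}(i,j))=(Y^0Z^0-Z^0Y^0)^T(i,j)$ and the cyclically analogous formulas for $Y^{-1},Z^{-1}$ (cf.\ (\ref{d-1})), and contracting the resulting matrix indices against $d_{dR}X^0,d_{dR}Y^0,d_{dR}Z^0$ via cyclicity of the trace, one gets
\begin{equation*}
d\phi=\operatorname{tr}\!\big([Y^0,Z^0]\,d_{dR}X^0\big)+\operatorname{tr}\!\big([Z^0,X^0]\,d_{dR}Y^0\big)+\operatorname{tr}\!\big([X^0,Y^0]\,d_{dR}Z^0\big).
\end{equation*}
A short independent computation with $W=\operatorname{tr}(X^0Y^0Z^0)-\operatorname{tr}(X^0Z^0Y^0)$ and cyclicity of the trace shows the right-hand side equals $d_{dR}W$. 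Hence $d\phi=d_{dR}W=-d_{dR}\Phi$, i.e.\ $d_{dR}\Phi+d\phi=0$. Combining this with the previous paragraph, $(\Phi,\phi)$ solves (\ref{relation}), which proves the claim; moreover $d_{dR}W$ vanishes on $Crit(W)$ by definition, matching the requirement $d_{dR}f|_t=0$ in Theorem 2.10 of \cite{BBBBJ15}.

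The only genuinely delicate part is the bookkeeping of Koszul signs — the sign incurred when $d_{dR}$ is commuted past $d$, and the sign in the Leibniz rule for $d$ applied to the product of a degree $-1$ generator with a $1$-form — which must be checked to be consistent with the transposes appearing in $d(X^{-1})$ and in the formula for $\gamma^*\omega$. If a sign convention came out differently one would simply replace $\phi$ by $-\phi$ and $\Phi$ by $-\Phi$, so the statement is robust; I expect this sign matching to be the main obstacle, with everything else a few lines of direct computation.
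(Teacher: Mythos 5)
Your proof is correct and follows essentially the same route as the paper: you pick the same degree $-1$ one-form $\phi=\operatorname{tr}(X^{-1}(d_{dR}X^0)^T+\cdots)$ (written in coordinates), verify $\omega\sim(d_{dR}\phi,0,\dots)$ against the formula for $\gamma^*\omega$, and check $d_{dR}\Phi+d\phi=0$ via the Koszul differential $d(X^{-1})=(Y^0Z^0-Z^0Y^0)^T$ and cyclicity of the trace. The only difference is that you spell out the index/transpose bookkeeping that the paper leaves as "easy to see."
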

\begin{proof}
	We will choose $\phi$ as in Theorem 2.10 in \cite{BBBBJ15} and then check the relations (\ref{relation}).
	We take $\phi\in (\Omega^1_{A^\bullet})^{-1}$ to be
		\begin{equation*} tr(X^{-1}(d_{dR}X^{0})^T+Y^{-1}(d_{dR}Y^{0})^T+Z^{-1}(d_{dR}Z^{0})^T).
		\end{equation*}
		It is easy to see that $\omega\sim(d_{dR}(\phi), 0, 0...)$.
	
	Take $\Phi=-tr(X^0[Y^0, Z^0])$. Then 
	\begin{equation*}
		\begin{split}
			d_{dR}(\Phi)=&-tr((d_{dR}X^0)Y^0Z^0-(d_{dR}X^0)Z^0Y^0)-tr(X^0(d_{dR}Y^0)Z^0-X^0Z^0(d_{dR}Y^0))\\
			&-tr(X^0Y^0(d_{dR}Z^0)-X^0(d_{dR}^0Z)Y^0)
		\end{split}
	\end{equation*}
	On the other hand, 
		\begin{equation*}
			\begin{split}
				d(\phi)&=tr((Y^0Z^0-Z^0Y^0)^T(d_{dR}X^0)^T+(Z^0X^0-X^0Z^0)^T(d_{dR}Y^0)^T+(X^0Y^0-Y^0X^0)^T(d_{dR}Z^0)^T)
				)
			\end{split}
		\end{equation*}
	As a result, we have 
	\begin{equation*}
		d_{dR}(\Phi)+d\phi=0.
	\end{equation*}
\end{proof}
Take the Hamiltonian $H=-\Phi=trX_0[Y_0, Z_0]$, we have 
\begin{thm}
	\label{Crit}
	The d-critical locus structure truncated from the $-1$-shifted symplectic structure on $\mathcal{M}^n_{\omega_S}$ have critical charts all isomorphic to \ccgreen{$(Crit(H), \operatorname{Spec}(A_n^\bullet(0)), H, i)=(Spec(A_n/(\partial W)), Spec(A_n), W, i)$.}
\end{thm}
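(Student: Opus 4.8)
The plan is to reduce the assertion to a single local model and then run the truncation recipe of \cite{BBBBJ15}. By Proposition~\ref{cover} the stack $\mathcal{M}_{\omega_S}^n$ is covered by open substacks isomorphic to $\mathfrak{X}=[\mathbf{Spec}(A_n^\bullet)/GL(n)]$, so it suffices to determine the critical chart that the canonical d-critical structure induces on one such copy. Under the equivalence $f:\mathfrak{X}\to\mathcal{M}_{\mathcal{C}}$ established in Section~\ref{sec:explicit}, the $-1$-shifted symplectic form of Theorem~\ref{Thm: BD19} restricts to a $-1$-shifted symplectic form on $\mathfrak{X}$, and the two lemmas preceding this theorem have already carried out the needed computation: its pullback $\gamma^*\omega$ to $\mathbf{Spec}(A_n^\bullet)$ is the explicit Darboux $2$-form pairing $X^{-1},Y^{-1},Z^{-1}$ with $X^0,Y^0,Z^0$, and the associated superpotential can be taken to be $\Phi=-\operatorname{tr}(X^0[Y^0,Z^0])$, so that the Hamiltonian is $H=-\Phi=\operatorname{tr}(X^0[Y^0,Z^0])=W$.

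With this data in hand I would invoke Theorem~2.10 and Corollary~2.11 of \cite{BBBBJ15}, together with the uniqueness statement of Theorem~\ref{Thm: BBBBJ15}. The cdga $A_n^\bullet$ is of standard form --- it is free over $A_n^\bullet(0)=A_n$ on the degree~$-1$ Koszul generators together with the degree~$-2$ generators $T(i,j)$ added to make $\mathbb{L}^\bullet_{\mathfrak{X}}$ self-dual --- and $\gamma^*\omega$ is in Darboux shape with Hamiltonian $W$, so the induced critical chart is $(Crit(H),\operatorname{Spec}(A_n^\bullet(0)),H,i)$. Since $A_n^\bullet(0)=A_n$ and $Crit(H)=Crit(W)=\{dW=0\}=\operatorname{Spec}(A_n/(\partial W))$ by definition of the critical locus, this is precisely $(\operatorname{Spec}(A_n/(\partial W)),\operatorname{Spec}(A_n),W,i)$. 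The accompanying smooth morphism to $M_{\omega_S}^n$ is then the $GL(n)$-quotient map onto the open substack $[Crit(W)/GL(n)]$, and comparison with Example~\ref{HilbnC3} and with \cite{KS21} is immediate; in particular this recovers the d-critical chart $(\operatorname{Hilb}^n(\mathbb{C}^3),\operatorname{NHilb}^n(\mathbb{C}^3),W,i)$ after imposing the cyclicity condition.

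The main obstacle I expect is not mathematical content but matching conventions with \cite{BBBBJ15}. One must check that $A_n^\bullet$ with $\gamma^*\omega$ genuinely satisfies the hypotheses of Theorem~2.10 there; in particular the minimality condition need not hold at a general point of $Crit(W)$, since $dT(i,j)=[X^0,X^{-1}]+[Y^0,Y^{-1}]+[Z^0,Z^{-1}]$ has nonzero linear part and the chart $\operatorname{Spec}(A_n)$ has dimension $3n^2$, larger than $\dim H^0(\mathbb{L}_{M_{\omega_S}^n}|_p)=\dim\operatorname{Ext}^1(E,E)$ in general. The way around this is to use the uniqueness of the canonical d-critical structure in Theorem~\ref{Thm: BBBBJ15}: the section $s$ is characterised by property (b), which together with Definition~\ref{dcrit} is verified by any chart $(\operatorname{Spec}(A_n/(\partial W)),\operatorname{Spec}(A_n),W,i)$ for which $\iota_{R,U}(s|_R)=i^{-1}(W)+I_{R,U}^2$ holds, and this identity is exactly what the Darboux computation above yields --- after, if necessary, passing to a minimal standard form neighbourhood and observing that the resulting superpotential differs from $W$ only by a nondegenerate quadratic form in auxiliary variables, which leaves the d-critical section unchanged. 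A secondary bookkeeping point is to keep the quotient by $GL(n)$ consistent throughout, since critical charts of a d-critical \emph{stack} are smooth schemes carrying a smooth map to the stack rather than the stack itself.
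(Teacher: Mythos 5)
Your proposal follows the paper's own argument essentially verbatim: reduce to the local model $\mathfrak{X}$ via Proposition~\ref{cover}, use the two preceding lemmas to put $\gamma^*\omega$ in Darboux form with Hamiltonian $H=\operatorname{tr}(X^0[Y^0,Z^0])=W$, and read off the critical chart $(\operatorname{Spec}(A_n/(\partial W)),\operatorname{Spec}(A_n),W,i)$ from Theorem~2.10 and Corollary~2.11 of \cite{BBBBJ15}. Your added remark about the chart not being minimal and the superpotential changing only by a nondegenerate quadratic form under stabilization is a caveat the paper passes over silently, but it does not change the argument.
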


In \cite{KS21}, we showed that there is a d-critical locus structure on $Hilb^n(\omega_S)$ for $S=\mathbb{P}^2$ or $S=\mathbb{F}_n$. 
\begin{thm}\cite{KS21}
	\label{KS21}
	Suppose that $S=\mathbb{P}^2$ or $S=\mathbb{F}_n$.  Then $\operatorname{Hilb}^n(\omega_S)$ has a d-critical locus structure with critical charts all isomorphic to $(\operatorname{Hilb}^n(\mathbb{C}^3),N\operatorname{Hilb}^n(\mathbb{C}^3),W,i)$.
\end{thm}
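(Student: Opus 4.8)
The plan is to deduce this from Theorem~\ref{Crit} by descending the canonical d-critical structure on the moduli stack of sheaves along the morphism that forgets a cyclic generator. First I would introduce the forgetful morphism $q\colon\operatorname{Hilb}^n(\omega_S)\to M^n_{\omega_S}$ sending a length-$n$ subscheme $Q\subset\omega_S$ to the sheaf $\mathcal{O}_Q$; since $\mathcal{O}_Q$ is a quotient of $\mathcal{O}_{\omega_S}$, its image is the open substack $M^{n,\mathrm{cyc}}_{\omega_S}$ of sheaves generated by a global section. The key local claim is that $q$ is smooth of relative dimension $n$: pulling $q$ back over a chart $U_\alpha=\pi^{-1}(V_\alpha)\simeq\mathbb{C}^3$ of Proposition~\ref{cover} along the atlas $\operatorname{Crit}(W)^{\mathrm{cyc}}\to[\operatorname{Crit}(W)^{\mathrm{cyc}}/\operatorname{GL}(n)]=M^{n,\mathrm{cyc}}_{\mathbb{C}^3}$ identifies the base change with the projection $(\operatorname{Crit}(W)\times V)^{\mathrm{cyc}}\to\operatorname{Crit}(W)^{\mathrm{cyc}}$ forgetting the vector, which is the restriction of the trivial rank-$n$ bundle to the open locus where the vector is cyclic. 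The same computation shows $\operatorname{GL}(n)$ acts freely on the cyclic locus, so $\operatorname{Hilb}^n(\mathbb{C}^3)$ is the genuine geometric quotient $\{dW=0\}\subset\operatorname{NHilb}^n(\mathbb{C}^3)$ of Example~\ref{HilbnC3}; and rerunning the covering argument of Proposition~\ref{cover} on the supports of length-$n$ subschemes shows $\operatorname{Hilb}^n(\omega_S)$ is covered by the open subschemes $\operatorname{Hilb}^n(U_\alpha)\simeq\operatorname{Hilb}^n(\mathbb{C}^3)$.

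Next I would descend the d-critical structure. By Theorems~\ref{Thm: BD19} and~\ref{Thm: BBBBJ15}, $M^n_{\omega_S}$ carries the canonical d-critical structure $s_{\mathrm{can}}$, and by functoriality of d-critical structures under smooth morphisms (\cite{Joy15}; \cite{BBBBJ15}) the smooth morphism $q$ pulls it back to a d-critical structure $s:=q^\star(s_{\mathrm{can}})$ on the scheme $\operatorname{Hilb}^n(\omega_S)$. This is automatically a \emph{global} section of $\mathcal{S}^0_{\operatorname{Hilb}^n(\omega_S)}$, so the compatibility of local sections on overlaps that was verified by hand in \cite{KS21} is now built in, and it only remains to identify the critical charts.

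For the identification, I would restrict to $\operatorname{Hilb}^n(U_\alpha)\simeq\operatorname{Hilb}^n(\mathbb{C}^3)$ and invoke Theorem~\ref{Crit}, which presents $s_{\mathrm{can}}$ on $M^n_{U_\alpha}\simeq[\operatorname{Crit}(W)/\operatorname{GL}(n)]$ by the potential $W=\operatorname{tr}(X^0[Y^0,Z^0])$ on the smooth scheme $\operatorname{Spec}(A_n)=\operatorname{Hom}(V,V)^3$. Transporting this critical chart along $q$ — which, as observed above, is the projection forgetting a cyclic vector — and using that $W$ does not involve the vector $v$, so that it descends to $\operatorname{NHilb}^n(\mathbb{C}^3)=(\operatorname{Hom}(V,V)^3\times V)^{\mathrm{stab}}\sslash\operatorname{GL}(n)$ and there agrees with the $W$ of Example~\ref{HilbnC3}, one concludes that $(\operatorname{Hilb}^n(\mathbb{C}^3),\operatorname{NHilb}^n(\mathbb{C}^3),W,i)$ is a critical chart for $(\operatorname{Hilb}^n(\omega_S),s)$ and that $s$ is the d-critical structure it induces. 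Since this argument uses only Proposition~\ref{cover}, it applies to any smooth projective toric surface $S$ and recovers \cite{KS21} when $S=\mathbb{P}^2$ or $S=\mathbb{F}_n$.

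The main obstacle is this chart transport. The morphism $q$ is smooth from a scheme to a stack and has positive relative dimension — it is a bijection on isomorphism classes onto $M^{n,\mathrm{cyc}}_{\omega_S}$ yet not a monomorphism, because the automorphisms of a cyclic sheaf act transitively on its cyclic generators — so one must check carefully that the Behrend--Bryan--Szendr\H{o}i chart pulls back correctly. The clean way is to exploit the explicit presentations: both $\operatorname{NHilb}^n(\mathbb{C}^3)$ and the $\operatorname{GL}(n)$-atlas of $M^n_{\mathbb{C}^3}$ are built from the \emph{same} smooth scheme $\operatorname{Hom}(V,V)^3$ with the \emph{same} potential $\operatorname{tr}(X[Y,Z])$, and the excess directions on the two sides — a free family of cyclic vectors, respectively a free $\operatorname{GL}(n)$-orbit — do not affect the sheaf $\mathcal{S}^0$; turning this into an honest equality of sections of $\mathcal{S}^0$, using Theorem~\ref{thm1} together with the smooth-pullback statements of \cite{Joy15} and \cite{BBBBJ15}, is where the real work lies.
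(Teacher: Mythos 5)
Your argument is sound in outline, but it takes a genuinely different route from the one the paper relies on. In the paper this statement is simply imported from \cite{KS21}, where the d-critical structure on $\operatorname{Hilb}^n(\omega_S)$ was built by hand: one takes the charts $(\operatorname{Hilb}^n(\mathbb{C}^3),\operatorname{NHilb}^n(\mathbb{C}^3),W,i)$ over a suitable cover and explicitly verifies that the local sections $W_i$ agree modulo $I^2_{\operatorname{Hilb}^n(\mathbb{C}^3),\operatorname{NHilb}^n(\mathbb{C}^3)}$ on overlaps, which is why the hypothesis $S=\mathbb{P}^2$ or $\mathbb{F}_n$ appears. You instead deduce the statement from Theorem \ref{Crit} by smooth descent along the Quot-to-moduli morphism $q$, which is essentially the content of the paper's closing Corollary run in the opposite logical direction: the paper treats Theorem \ref{KS21} as independent input and then identifies the two structures, whereas you use the canonical structure to \emph{construct} the one on $\operatorname{Hilb}^n(\omega_S)$. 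What your route buys is that the overlap compatibility is automatic and the argument works for every smooth projective toric $S$ covered by Proposition \ref{cover}, not just $\mathbb{P}^2$ and $\mathbb{F}_n$; what it costs is dependence on the full derived-geometric machinery of Sections 3--4, while the proof in \cite{KS21} is elementary and self-contained. Your local analysis is correct: the base change of $q$ along the atlas $\operatorname{Crit}(W)\to[\operatorname{Crit}(W)/\operatorname{GL}(n)]$ is indeed the open cyclic locus of the trivial rank-$n$ bundle, $\operatorname{GL}(n)$ acts freely there, and $W$ is independent of $v$ and $\operatorname{GL}(n)$-invariant, so it descends to $\operatorname{NHilb}^n(\mathbb{C}^3)$.

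The one step you should make explicit is the descent of the \emph{section}, not just of the chart. The comparison has to be carried out on the common smooth cover $(\operatorname{Crit}(W)\times V)^{\mathrm{cyc}}$, where both candidate sections of $\mathcal{S}^0$ are induced by $W\circ\pi$ on $(\operatorname{Hom}(V,V)^3\times V)^{\mathrm{cyc}}$ (using that $d(W\circ\pi)=\pi^*dW$ for $\pi$ smooth, so the critical loci match); one then uses that for a smooth surjection the pullback map on $\mathcal{S}^0$ of \cite{Joy15} is injective on sections, so equality upstairs forces equality on $\operatorname{Hilb}^n(U_\alpha)$. With that supplied, the proposal is a complete and somewhat more general proof of the existence statement; note only that the charts so produced are not minimal in the sense of Theorem \ref{Thm: BBBBJ15}(a), which is permitted by Definition \ref{dcrit} but worth saying.
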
 

This was done by explicitly checking that the local sections $W_i$'s agree up to $I^2_{\operatorname{Hilb}^n(\mathbb{C}^3), \operatorname{NHilb}^n(\mathbb{C}^3)}$ on intersections. Thus for $S=\mathbb{P}^2$ or $S=\mathbb{F}_n$, we have 
\begin{cor}
	The pullback of d-critical locus structure in Theorem \ref{Crit} to $Hilb^n(\omega_S)$ is equivalent to the d-critical locus structure in Theorem \ref{KS21}.
\end{cor}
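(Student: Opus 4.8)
The plan is to combine the two d-critical locus structures that have now been put on the table. By Theorem~\ref{Crit}, the canonical d-critical locus structure on $M^n_{\omega_S}$ has, at every point, a critical chart of the form $(\operatorname{Spec}(A_n/(\partial W)), \operatorname{Spec}(A_n), W, i)$. Restricting attention to the open substack $\operatorname{Hilb}^n(\omega_S)\subset M^n_{\omega_S}$ parametrizing the cyclic modules (the stable locus of the linearization in Example~\ref{HilbnC3}), these charts descend to the GIT quotient and become precisely the charts $(\operatorname{Hilb}^n(\mathbb{C}^3), \operatorname{NHilb}^n(\mathbb{C}^3), W, i)$, since $\operatorname{NHilb}^n(\mathbb{C}^3)$ is the quotient of the stable locus in $\operatorname{Hom}(V_n,V_n)^3\times V_n$ by $GL_n$, and $\operatorname{Hilb}^n(\mathbb{C}^3)=\{dW=0\}$ inside it. So the first step is to check that the pullback along $\operatorname{Hilb}^n(\omega_S)\hookrightarrow M^n_{\omega_S}$ of the canonical d-critical structure is locally modeled exactly by these charts; this is essentially bookkeeping, using that the cover by copies of $\mathfrak{X}$ from Proposition~\ref{cover} refines to a cover of $\operatorname{Hilb}^n(\omega_S)$ by copies of $\operatorname{NHilb}^n(\mathbb{C}^3)$ as in \cite{KS21}.

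Second, I would invoke Theorem~\ref{KS21}: the construction of \cite{KS21} already produces a d-critical locus structure on $\operatorname{Hilb}^n(\omega_S)$ whose critical charts are the very same $(\operatorname{Hilb}^n(\mathbb{C}^3), \operatorname{NHilb}^n(\mathbb{C}^3), W, i)$, glued along the overlaps by the explicit comparison of the local sections $W_i$ up to $I^2_{\operatorname{Hilb}^n(\mathbb{C}^3),\operatorname{NHilb}^n(\mathbb{C}^3)}$. Thus both d-critical structures have the same underlying scheme and the same local charts. To conclude they agree as d-critical loci, it suffices to check that the global sections $s^{\mathrm{can}}, s^{KS}\in H^0(\mathcal{S}^0_{\operatorname{Hilb}^n(\omega_S)})$ coincide. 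Since $\mathcal{S}^0$ is a sheaf and both sections restrict on each chart of the common cover to the canonical section determined by $\iota_{R,U}(s|_R)=i^{-1}(W)+I^2_{R,U}$ (this is the defining property in Definition~\ref{dcrit}, and it is forced on the canonical side by Theorem~\ref{Thm: BBBBJ15}(b) and on the \cite{KS21} side by construction), the two sections agree locally, hence globally. This gives the equivalence of d-critical locus structures.

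The main obstacle, and the only place requiring genuine care, is the compatibility of the \emph{identifications} of the charts over overlaps: a priori the canonical structure comes with its own gluing data (coming from the derived-geometric origin and the choices of minimal standard form neighbourhoods in Theorem 2.10 of \cite{BBBBJ15}), while \cite{KS21} glues via a specific explicit collection of isomorphisms of critical charts. What rescues the argument is that a d-critical locus structure is \emph{not} the data of charts plus gluing — it is just the pair $(Y,s)$ with $s$ a global section of $\mathcal{S}^0_Y$, and the local critical-chart presentations are merely witnesses to the existence condition. So once the underlying schemes are identified (both are $\operatorname{Hilb}^n(\omega_S)$) and both sections are shown to restrict, on a common affine cover, to $i^{-1}(W) + I^2$ modulo the sheaf identifications of Theorem~\ref{thm1}(ii), equality of the sections — hence equivalence of the two d-critical loci — follows from the separatedness of the sheaf $\mathcal{S}^0$. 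I would write the corollary's proof as: (i) pull back Theorem~\ref{Crit} to $\operatorname{Hilb}^n(\omega_S)$ and identify the descended charts with those of Example~\ref{HilbnC3}; (ii) observe both resulting sections satisfy the normalization $\iota_{R,U}(s|_R)=i^{-1}(W)+I^2_{R,U}$ on each chart of a common cover; (iii) conclude $s^{\mathrm{can}}=s^{KS}$ by the sheaf property, which is exactly the asserted equivalence.
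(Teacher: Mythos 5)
Your proposal is correct and matches the paper's (essentially unwritten) argument: the paper states the corollary as an immediate consequence of Theorems~\ref{Crit} and~\ref{KS21}, precisely because both structures are normalized on a common cover of charts $(\operatorname{Hilb}^n(\mathbb{C}^3), \operatorname{NHilb}^n(\mathbb{C}^3), W, i)$ by $\iota_{R,U}(s|_R)=i^{-1}(W)+I^2_{R,U}$, so the sections agree by the sheaf property of $\mathcal{S}^0$. Your observation that a d-critical locus is only the pair $(Y,s)$ and not chart-plus-gluing data is exactly the point that makes the conclusion immediate.
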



\begin{thebibliography}{99}

\bibitem[BBJ]{BBJ} 
Christopher Brav, Vittoria Bussi, and Dominic Joyce, \emph{A Darboux theorem for derived schemes with shifted symplectic structure,} 

\bibitem[BBBBJ15]{BBBBJ15}
O. Ben-Bassat, C. Brav, V. Bussi and D. Joyce, \emph{A 'Darboux theorem' for shifted symplectic structures on derived Artin stacks, with applications}
Geom. Topo. {\bf 19} (2015) 1287--1359


\bibitem[BBS13]{BBS13}
K. Behrend, J. Bryan and B. Szendr\H oi, \emph{Motivic Degree Zero Donaldson-Thomas Invariants,} Invent. Math., 192 (2013), 111?160.

		\bibitem[BD18]{BD18}
C. Brav, T. Dyckerhoff, \emph{Relative Calabi-Yau structures II: Shifted Lagrangians in the moduli of objects,} preprint, arXiv: 1812.11913


\bibitem[BD19]{BD19}
C. Brav, T. Dyckerhoff, \emph{Relative Calabi-Yau structures II: Shifted Lagrangians in the
	moduli of objects,} preprint, arXiv: 1812.11913

	\bibitem[Beh09]{Beh09}
K.\ Behrend, \emph{{D}onaldson-{T}homas type invariants via microlocal geometry,}
Annals of Mathematics, {\bf 170} (2009), 1307--1338


\bibitem[BF97]{BF97}
K. Behrend and B. Fantechi, \emph{The intrinsic normal cone,}
Inv. Math. {\bf 128} (1997), 45--88

\bibitem[BF08]{BF08}
K. Behrend and B. Fantechi, \emph{Symmetric  obstruction  theories  and Hilbert schemes of points on threefolds,} Algebra Number Theory, {\bf 2} (2008), 313--345 

\bibitem[Bri05]{Bri05}
T. Bridgeland, \emph{T-structures on some local Calabi-Yau varieties,} Journal of Algebra, {\bf 289} (2005), 453--483

\bibitem[BJM19]{BJM19} 
V. Bussi, D. Joyce and S. Meinhardt, \emph{On motivic vanishing cycles of critical loci,} Journal of Algebraic Geometry, {\bf 28} (2019), 405--438 


\bibitem[Dav]{Dav} 
B. Davison, \emph{Invariance of orientation data for ind-constructible Calabi-Yau $A_\infty$ categories under derived equivalence,} preprint, arXiv:1006.5475   




\bibitem[GR10a]{GR10a}, Dennis Gaitsgory, Nick Rozenblyum, \emph{A study in derived algebraic geometry Volume I: Correspondences and duality, }

\bibitem[GR10b]{GR10b}
Dennis Gaitsgory, Nick Rozenblyum, \emph{A study in derived algebraic geometry Volume II: Deformations, Lie theory and formal geometry}


\bibitem[Gin]{Gin} 
V. Ginzburg, \emph{Calabi-Yau Algebras,} preprint, arXiv:math/0612139   

\bibitem[Joy15]{Joy15} 
D. Joyce, \emph{A classical model for derived critical loci,} J. Diff. Geom. 101(2015), 289-367.	


\bibitem[KS]{KS} 
M. Kontsevich and Y. Soibelman, \emph{Stability structures, motivic Donaldson-Thomas invariants and cluster transformations,} preprint,	arXiv:0811.2435   

\bibitem[KS21]{KS21}
S. Katz, Y. Shi, \emph{D-critical locus structure on the Hilbert schemes of some local toric Calabi-Yau threefolds}, preprint, arXiv:2009.05529




\bibitem[Lur]{Lur}
J. Lurie, \emph{Higher algebra}, preprint.


\bibitem[PTVV13]{PTVV13}
T. Pantev,  B. To\"en,  M. Vaqui\'e and G.\ Vezzosi, \emph{Shifted symplectic structures,} Publ. Math. I.H.E.S. {\bf 117} (2013), 271--328.

\bibitem[RS]{RS}
Andrea T. Ricolfi, Michail Savvas, \emph{The d-critical structure on the Quot scheme of points of a Calabi-Yau 3-fold,} arXiv:2106.16133

\bibitem[Shi18]{Shi18}
Y. Shi, \emph{Orientation data for coherent sheaves on the local projective plane}, arXiv:1809.01776, to appear in Math. Res. Lett. 

\bibitem[Tho00]{Tho00}
R. P. Thomas, \emph{A holomorphic Casson invariant for Calabi-Yau 3-folds, and bundles on K3 fibrations,}
Journal of Differential Geometry, {\bf 54} (2000), 367--438


\bibitem[TV04]{TV04}
Bertrand To\"en, Gabriele Vezzosi, \emph{Homotopical Algebraic Geometry I: Topos theory,} arXiv: math/0207028

\bibitem[TV06]{TV06}
Bertrand To\"en, Gabriele Vezzosi, \emph{Homotopical Algebraic Geometry II: geometric stacks and applications} arXiv: math/0404373

\bibitem[Toe06b]{Toe06b}
Bertrand To\"en, \emph{The homotopy theory of dg-categories and derived Morita theory,} Inventiones Mathematicae, {\bf 167} (2007), 3, 615--667

\bibitem[TV07]{TV07}
B. To\"{e}n, M. Vaqui\'{e} \emph{Moduli of objects in dg-categories,} 
Annales scientifiques de l'\'{E}cole Normale Sup\'{e}rieure, S\'{e}rie 4, {\bf 40(3)} (2007), 387--444.



\end{thebibliography}
\end{document}